\renewcommand{\b}[1]{\boldsymbol{#1}}
\newcommand{\bn}{{\boldsymbol{n}}}
\newcommand{\bq}{{\boldsymbol{q}}}
\newcommand{\bw}{{\boldsymbol{w}}}
\newcommand{\cA}{\mathcal{A}}
\newcommand{\cB}{\mathcal{B}}
\newcommand{\cF}{\mathcal{F}}
\newcommand{\CF}{C_\mathrm{F}}
\newcommand{\CFavg}{C_\mathrm{F}^\mathrm{avg}}
\newcommand{\CFlow}{C_\mathrm{F}^\mathrm{low}}
\newcommand{\CFup}{C_\mathrm{F}^\mathrm{up}}
\newcommand{\cM}{\mathcal{M}}
\newcommand{\CP}{C_\mathrm{P}}
\newcommand{\CPlow}{C_\mathrm{P}^\mathrm{low}}
\newcommand{\CPup}{C_\mathrm{P}^\mathrm{up}}
\newcommand{\cT}{\mathcal{T}}
\newcommand{\CT}{C_\mathrm{T}}
\newcommand{\CTlow}{C_\mathrm{T}^\mathrm{low}}
\newcommand{\CTup}{C_\mathrm{T}^\mathrm{up}}
\newcommand{\ddiv}{\operatorname{div}}
\newcommand{\ErrTol}{E_{\mathrm{TOL}}} 
\newcommand{\GammaD}{{\Gamma_{\mathrm{D}}}}
\newcommand{\GammaN}{{\Gamma_{\mathrm{N}}}}
\newcommand{\Hdiv}[1][\Omega]{\mathbf{H}(\ddiv,#1)}
\newcommand{\lil}{{\lambda_1^{\mathrm{low}}}}
\newcommand{\liil}{{\lambda_2^{\mathrm{low}}}}
\newcommand{\liiu}{{\lambda_2^{\mathrm{up}}}}
\newcommand{\Nel}{N_\mathrm{el}}
\newcommand{\NDOF}{N_\mathrm{DOF}}
\newcommand{\norm}[1]{\|#1\|}
\newcommand{\oGammaD}{\overline\Gamma_{\mathrm{D}}}
\newcommand{\oGammaN}{\overline\Gamma_{\mathrm{N}}}
\newcommand{\oHI}{\widetilde{H}{}^1(\Omega)}
\newcommand{\R}{\mathbb{R}}
\newcommand{\RelErr}{E_{\mathrm{REL}}} 
\newcommand{\trinorm}[1]{\|#1\|_a}
\newcommand{\ttg}[1]{\textrm{(\ref{#1})}}
\title{Two-sided bounds for eigenvalues of differential operators
with applications to Friedrichs', Poincar\'e, trace, and similar constants\thanks{The support of I.\ \v{S}ebestov\'a by the project MathMAC - University center for mathematical modeling, applied analysis and computational mathematics of the Charles University in Prague and the support of T.\ Vejchodsk\'y by RVO~67985840
are gratefully acknowledged.}}
\author{Ivana \v{S}ebestov\'a\footnotemark[2]
        \and Tom\'a\v s Vejchodsk\'y\footnotemark[3]}
\begin{document}

\maketitle

\renewcommand{\thefootnote}{\fnsymbol{footnote}}

\footnotetext[2]{Department of Numerical Mathematics, Faculty of Mathematics and Physics, Charles University in Prague, Sokolovsk\'a 83, CZ-186 75 Praha 8, Czech Republic ({\tt ivasebestova@seznam.cz}).}
\footnotetext[3]{Institute of Mathematics, Academy of Sciences, {\v Z}itn{\'a} 25, CZ-115 67 Praha 1, Czech Republic ({\tt vejchod@math.cas.cz}).}

\renewcommand{\thefootnote}{\arabic{footnote}}

\begin{abstract}
We present a general numerical method for computing guaranteed two-sided
bounds for principal eigenvalues of symmetric linear elliptic
differential operators.
The approach is based on the Galerkin method, on the method
of a priori-a posteriori inequalities,
and on a complementarity technique.
The two-sided bounds are formulated in a general Hilbert space setting
and as a byproduct we prove an abstract inequality of Friedrichs'--Poincar\'e
type.
The abstract results are then applied to Friedrichs', Poincar\'e,
and trace inequalities and fully computable two-sided bounds on
the optimal constants in these inequalities are obtained.
Accuracy of the method is illustrated on numerical examples.
\end{abstract}

\begin{keywords}
bounds on spectrum, a posteriori error estimate, optimal constant, Friedrichs' inequality, Poincar\'e inequality, trace inequality, Hilbert space
\end{keywords}

\begin{AMS}
35P15, 35J15, 65N25, 65N30
\end{AMS}

\pagestyle{myheadings}
\thispagestyle{plain}
\markboth{IVANA \v{S}EBESTOV\'A  AND TOM\'A\v{S} VEJCHODSK\'Y}{TWO-SIDED BOUNDS FOR EIGENVALUES}


\section{Introduction}

%

Eigenvalue problems for differential operators have attracted a lot of
attention as they have many applications. These include the dynamic analysis of
mechanical systems
\cite{Alonso_Russo_Padra_Rodri_AEE_FEM_SAVP_01,
Chu_King_Tsung_Palindr_EP_sum_2010,Leissa_Vibrat_Plates-69}, linear stability of
flows in fluid mechanics \cite{Larson_Instab_Visco_Flows_92}, and electronic
band structure calculations \cite{Martin_Electr_struc_bas_th_prac_methods_04}.
In this paper, we concentrate on guaranteed two-sided bounds of the principal
(smallest) eigenvalue of symmetric linear elliptic operators. The standard Galerkin method for solution of eigenproblems is efficient
and its convergence and other properties are well analysed
\cite{BabOsb:1989,BabOsb:1991,Boffi:2010}.
It is also well known for providing upper bounds on eigenvalues.
However, in many applications a reliable \emph{lower bound} of the smallest
eigenvalue is the key piece of information
and, unfortunately, the Galerkin method cannot provide it.

The question of lower bounds on the smallest eigenvalue has already been
studied for several decades. For example see
\cite{Ogawa_Protter_A_low_bound_first_eig_sec_order_ellip_oper_68}, where the
second order elliptic eigenvalue problems with Dirichlet boundary conditions are
considered. Another technique that gives the lower bounds not only for the first
eigenvalue is the method of intermediate problems. It is based on finding
a base problem and subsequently introducing intermediate problems that
give lower bounds for eigenvalues of the original problem and at the same time
can be resolved explicitly, see for example
\cite{Bazley_Fox_Low_bounds_eig_Schrodinger_eq_61,
Beattie_Goerisch_Methods_low_bounds_eig_self-adjoint_95,
Wein_Stenger_Methods_Intermed_prob_eig_72}. A survey of this
technique can be found in
\cite{Fox_Rheinboldt_Comput_meth_deter_low_bounds_eig_oper_Hilbert_66}.


Nonconforming methods have been used for computing lower bounds on
eigenvalues, see for example \cite{AndRac:2012,LinXieLuoLiYan:2010,LuoLinXie:2012,Rannacher:1979,YanZhaLin:2010}.
However, these lower bounds are valid asymptotically only and hence
these methods \emph{do not guarantee} that the computed approximation
is really below the exact value.
Recently, sufficient conditions for producing lower bounds for eigenvalues of symmetric elliptic operators by nonconforming methods have been provided
in \cite{Hu_Huang_Lin_Lower_bounds_EO_NFEMs_2011arXiv}.
The described technique
is based on satisfying the saturation assumption and on the condition saying that the local approximation property of the underlying finite element space have to be better than its global continuity property. It is proved that the second condition is met by most commonly used nonconforming methods such as the Wilson element, linear nonconforming Crouzeix--Raviart element, the enriched nonconforming rotated $Q_1$ element, the Morley element, the Adini element, and the Morley--Wang--Xu element. The saturation assumption is proved for the Morley--Wang--Xu element, the enriched Crouzeix--Raviart element, and the Wilson element. Furthermore, new nonconforming
methods satisfying these properties are proposed. However, no numerical experiments
are presented.

Further, let us point out a recent result \cite{LinXieXu:MatComp},
where two-sided a priori bounds for the discretization error of eigenvectors
are given.

The method of \emph{a priori-a posteriori inequalities} that can be used
for computation of lower bounds on eigenvalues was described and published
in \cite{KutSig:1978,KutSig:1984,Sigillito:1977}. However, in these original
publications $C^2$-continuous test and trial functions have been used in
order to
compute the actual lower bound. These functions are difficult to work with
and therefore, we couple the original method of a priori-a posteriori inequalities
with the complementarity technique, where a certain flux function has to be
reconstructed, see, e.g.,
\cite{robustaee:2010,CheFucPriVoh:2009,HasHla:1976,Rep:2008,Complement:2010,Vohralik:2011}.
This flux reconstruction can be done in many ways.
We choose the most straightforward approach that can be handled by standard
Raviart--Thomas--N\'{e}d\'{e}lec finite element method.

Moreover, we generalize the original method of a priori-a posteriori inequalities
to the case of a compact operator between a pair of Hilbert spaces.
This generalization is especially useful for computing two-sided bounds
of the optimal constant in Friedrichs', Poincar\'e, trace, and similar
inequalities.
It is based on the fact that
the optimal constant in these inequalities is inversely proportional
to the square root of the smallest eigenvalue of the corresponding
symmetric linear elliptic partial differential operator.

Further, the generalization we have made enables to set up an abstract framework
in the Hilbert space setting. The abstract results then easily apply to
symmetric linear elliptic partial differential operators
and consequently to the optimal constant in the inequalities of
Friedrichs'--Poincar\'e type. Furthermore, as a byproduct of the abstract
setting, we obtain a simple proof of the validity of an abstract
inequality of Friedrichs'--Poincar\'e type in the Hilbert space setting.
The particular choices of the pair of Hilbert spaces, corresponding scalar products,
and the compact operator then naturally yield the validity of
Friedrichs', Poincar\'e, trace, Korn's and other inequalities.
%

The main motivation for our interest in two-sided bounds of the constants
in Friedrichs', Poincar\'e, trace, and similar inequalities stems
from the need of these bounds in a posteriori error estimation for numerical
solutions of partial differential equations. In particular, the existing
guaranteed upper bounds on the energy norm of the error utilize a kind of
complementarity technique, see, e.g.,
\cite{robustaee:2010,BraSch:2008,Korotov:2007,Rep:2008,Complement:2010,Vohralik:2011}.
Estimates of this kind contain constants from Friedrichs', Poincar\'e,
trace, and similar inequalities. Optimal values of these constants
are often unknown and therefore suitable approximations have to be used
in the error estimates.
These approximations have to provide upper bounds on these constants
in order to guarantee that the total error estimator is an
upper bound on the error.
Moreover, they have to be accurate due to the accuracy and
efficiency of the error estimates.

The method presented in this paper provides accurate upper bounds on these
constants. In addition, this method naturally considers the dependence
of the optimal constants on the equation coefficients and on the boundary
conditions. This dependence can be strong \cite{AM2012}
and its capturing might be
crucial for accuracy and robustness of a posteriori error bounds.

The rest of the paper is organized as follows.
Section~\ref{se:abseigenprob} introduces a general
variational eigenvalue problem in the Hilbert space setting.
It uses the spectral theory of compact operators to prove several properties
of this eigenvalue problem including the existence of the principal eigenvalue.
In Section~\ref{se:absineq} we naturally prove the abstract inequality
of Friedrichs'--Poincar\'e type and show the relation between the optimal constant
and the principal eigenvalue. Further, we briefly describe the Galerkin
method that yields an upper bound on the principal eigenvalue
and concentrate on the method of a priori-a posteriori inequalities
and on an abstract complementarity result leading to a lower bound
on the principal eigenvalue.
Sections~\ref{se:Fried}--\ref{se:trace} apply the abstract results
to the case of Friedrichs', Poincar\'e, and trace inequality
and fully computable two-sided bounds on the optimal constants in these
inequalities are obtained. Presented numerical experiments illustrate
accuracy of the method and dependence of the optimal constants on
a nonhomogeneous diffusion parameter.
Finally, Section~\ref{se:concl} draws the conclusions.

\section{Variational eigenvalue problem in the Hilbert space setting}
\label{se:abseigenprob}

Let $V$ and $H$ be two real Hilbert spaces with scalar products $(\cdot,\cdot)_V$
and $(\cdot,\cdot)_H$, respectively.
The norms induced by these scalar products are denoted by $\norm{\cdot}_V$
and $\norm{\cdot}_H$.
Further, let $\gamma : V \rightarrow H$ be a continuous, linear, and \emph{compact}
operator.
The center of our interest is the following eigenvalue problem.
Find $\lambda_i \in \R$, $u_i \in V$, $u_i \neq 0$ such that
\begin{equation}
\label{eq:EP1}
(u_i,v)_V = \lambda_i (\gamma u_i, \gamma v)_H \quad\forall v \in V.
\end{equation}
First, let us show that eigenvalues $\lambda_i$ of \ttg{eq:EP1} are positive.

\begin{lemma}
\label{le:lambdapos}
If $u_i \in V$ is an eigenvector corresponding to an eigenvalue
$\lambda_i$ of \ttg{eq:EP1} then $\gamma u_i \neq 0$ and $\lambda_i > 0$.
\end{lemma}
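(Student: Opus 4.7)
The plan is a short, direct argument by substituting specific test functions into the eigenvalue identity \ttg{eq:EP1}.

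First I would prove $\gamma u_i \neq 0$ by contradiction. Assume $\gamma u_i = 0$. Then the right-hand side of \ttg{eq:EP1} vanishes for every $v \in V$, so $(u_i,v)_V = 0$ for all $v \in V$. Taking $v = u_i$ yields $\norm{u_i}_V^2 = 0$, hence $u_i = 0$, which contradicts the assumption that $u_i$ is an eigenvector. Note that this step does not use compactness of $\gamma$ and does not even depend on the sign of $\lambda_i$.

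Next I would prove positivity of $\lambda_i$. Choosing $v = u_i$ in \ttg{eq:EP1} gives the identity
\begin{equation*}
\norm{u_i}_V^2 = \lambda_i \norm{\gamma u_i}_H^2.
\end{equation*}
The left-hand side is strictly positive because $u_i \neq 0$ and $\norm{\cdot}_V$ is a norm. The factor $\norm{\gamma u_i}_H^2$ on the right is strictly positive by the first step. Dividing, we obtain $\lambda_i = \norm{u_i}_V^2 / \norm{\gamma u_i}_H^2 > 0$.

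There is no real obstacle here; the only thing to be careful about is the logical ordering, namely that one must establish $\gamma u_i \neq 0$ before dividing by $\norm{\gamma u_i}_H^2$ in the Rayleigh-quotient step. Compactness of $\gamma$ plays no role in this particular lemma; it will presumably be used later to guarantee existence of eigenvalues via the spectral theorem for compact self-adjoint operators.
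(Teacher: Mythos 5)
Your proof is correct and follows essentially the same route as the paper: both arguments reduce to the identity $\norm{u_i}_V^2 = \lambda_i \norm{\gamma u_i}_H^2$ obtained by testing \ttg{eq:EP1} with $v = u_i$, from which $\gamma u_i \neq 0$ and $\lambda_i > 0$ follow immediately. The only cosmetic difference is that you establish $\gamma u_i \neq 0$ by a separate contradiction argument, whereas the paper reads it off directly from the nonvanishing of the product on the right-hand side.
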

\begin{proof}
Since $u_i \neq 0$, we have by \ttg{eq:EP1} that
$0 \neq \norm{u_i}_V^2 = \lambda_i \norm{\gamma u_i}_H^2$.
Thus, $\gamma u_i \neq 0$ and $\lambda_i$ has to be positive.
\end{proof}

Below we show that eigenvalues $\lambda_i$ and eigenvectors $u_i$ of \ttg{eq:EP1} correspond to eigenvalues and eigenvectors of a compact operator, respectively. Consequently, these eigenvalues form a countable sequence that can be ordered as $\lambda_1 \leq \lambda_2 \leq \cdots$.
To show this correspondence, we define a
\emph{solution operator} $S : H \rightarrow V$.
If $f\in H$ is arbitrary then the mapping $v \mapsto (f, \gamma v)_H$
is a continuous linear form on $V$ and, hence, the Riesz representation
theorem yields existence of a unique element $Sf \in V$ such that
\begin{equation}
\label{eq:solop}
  (Sf, v)_V = (f, \gamma v)_H \quad\forall v\in V.
\end{equation}
Consequently, the solution operator $S$ is linear and continuous.

The composition of operators $S$ and $\gamma$ is a
linear, continuous, and compact operator
$S\gamma : V \rightarrow V$, see \cite[Theorem~4.18 (f)]{Rudin_FA_91}.
In addition, this operator is selfadjoint, because
definition \ttg{eq:solop} yields
\begin{equation} \label{selfadjoint}
  (S\gamma u, v)_V = (\gamma u, \gamma v)_H = (\gamma v, \gamma u)_H
= (S\gamma v, u)_V = (u, S\gamma v)_V.
\end{equation}
Therefore, we can use the Hilbert--Schmidt spectral theorem for $S\gamma$,
see \cite[Theorem~4, Chapter II, Section 3]{Gaal_Lin_anal_repres_theo_73} and obtain that $V$ can be decomposed
into a direct sum of two subspaces
\begin{equation}
\label{eq:Vdecomp}
  V = \cM \oplus \ker(S\gamma),
\end{equation}
where $\ker(S\gamma) = \{ v \in V : S\gamma v = 0 \}$ is the kernel of $S\gamma$
and $\cM$ is generated by all eigenvectors of the operator $S\gamma$
corresponding to nonzero eigenvalues.
Let us recall that $u_i \in V$, $u_i \neq 0$ is an eigenvector of $S\gamma$
corresponding to an eigenvalue $\mu_i \in \R$ if
\begin{equation}
\label{eq:EP2}
  S\gamma u_i = \mu_i u_i.
\end{equation}
Furthermore, the Hilbert--Schmidt spectral theorem implies that
the system $u_1, u_2, \dots$ of eigenvectors corresponding to nonzero
eigenvalues of \ttg{eq:EP2} is countable and orthogonal in $V$.
A simple consequence of \ttg{eq:solop} and \ttg{eq:EP2} is the orthogonality
of images $\gamma u_i$, $i=1,2,\dots$ in $H$.
In this paper we consider the following normalization of these eigenvectors:
\begin{equation}
\label{eq:normalization}
  (\gamma u_i, \gamma u_j)_H = \delta_{ij} \quad\forall i,j=1,2,\dots,
\end{equation}
where $\delta_{ij}$ stands for the Kronecker's delta.

Now, let us observe that eigenproblems \ttg{eq:EP1} and \ttg{eq:EP2}
correspond to each other and, therefore, the spectral properties
of the compact operator $S\gamma$ translate to the properties
of the eigenproblem \ttg{eq:EP1}.

\begin{lemma}
\label{le:Sgamma}
Considering the above setting, the following statements hold true.
\begin{enumerate}
\item[1.]
Number $\lambda_i \in \R$ is an eigenvalue corresponding
to the eigenvector $u_i \in V$ of \ttg{eq:EP1}
if and only if
$\mu_i = 1/\lambda_i$ is a nonzero eigenvalue corresponding
to the eigenvector $u_i$ of the operator $S\gamma$, see \ttg{eq:EP2}.
\item[2.]
The number of eigenvalues $\lambda_i$ of \ttg{eq:EP1} such that $\lambda_i \leq M$
is finite for any $M>0$.
\item[3.]
The value $\lambda_1 = \inf\limits_{u\in V, u\neq 0}
{\norm{u}_V^2}/{\norm{\gamma u}_H^2}
$
is the smallest eigenvalue of \ttg{eq:EP1}.
\end{enumerate}
\end{lemma}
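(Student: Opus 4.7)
The plan is to establish Statement 1 by a direct computation using the definition of $S$, then deduce Statements 2 and 3 from the known spectral structure of the compact selfadjoint operator $S\gamma$.

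For Statement 1, I would argue both implications side-by-side. If $(\lambda_i,u_i)$ solves \ttg{eq:EP1}, Lemma~\ref{le:lambdapos} gives $\lambda_i>0$, so dividing by $\lambda_i$ yields $(\gamma u_i,\gamma v)_H = \lambda_i^{-1}(u_i,v)_V$ for every $v\in V$; comparing with the defining relation \ttg{eq:solop} of $S$ applied to $f=\gamma u_i$, both sides equal $(S\gamma u_i,v)_V$, so $S\gamma u_i = \lambda_i^{-1}u_i$, i.e.\ $\mu_i = 1/\lambda_i$ is a nonzero eigenvalue of $S\gamma$. Conversely, if $S\gamma u_i = \mu_i u_i$ with $\mu_i\neq 0$, testing against $v\in V$ and using \ttg{eq:solop} gives $(\gamma u_i,\gamma v)_H = \mu_i(u_i,v)_V$, which is \ttg{eq:EP1} with $\lambda_i=1/\mu_i$.

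For Statement 2, by Statement 1 the condition $\lambda_i\le M$ is equivalent to $\mu_i\ge 1/M>0$. Since $S\gamma$ is compact and selfadjoint, the Hilbert--Schmidt spectral theorem (already invoked in the text) guarantees that its nonzero eigenvalues form an at most countable sequence whose only possible accumulation point is $0$. Hence only finitely many $\mu_i$ can lie in $[1/M,\infty)$, and the claim follows.

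For Statement 3, the operator $S\gamma$ is not only compact and selfadjoint but also positive semidefinite because \ttg{selfadjoint} gives $(S\gamma u,u)_V = \|\gamma u\|_H^2 \ge 0$. Standard spectral theory for such operators states that the largest eigenvalue equals the supremum of the Rayleigh quotient, and the supremum is attained. Thus
\[
\mu_1 \;=\; \sup_{u\in V,\,u\neq 0} \frac{(S\gamma u,u)_V}{\|u\|_V^2} \;=\; \sup_{u\in V,\,u\neq 0} \frac{\|\gamma u\|_H^2}{\|u\|_V^2}
\]
is the largest nonzero eigenvalue of $S\gamma$ (here we assume $\gamma\not\equiv 0$, otherwise \ttg{eq:EP1} has no eigenvalue and the stated infimum is $+\infty$). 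Taking reciprocals and applying Statement 1, $\lambda_1 = 1/\mu_1$ is the smallest eigenvalue of \ttg{eq:EP1} and equals $\inf_{u\neq 0}\|u\|_V^2/\|\gamma u\|_H^2$.

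The main obstacle is really only the invocation of the Rayleigh characterization for the largest eigenvalue of a compact selfadjoint positive operator; this is a well-known fact but needs to be quoted from the same spectral theory reference already used. Everything else reduces to unfolding the definition of $S$ in \ttg{eq:solop}.
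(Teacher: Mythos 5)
Your proof is correct and follows essentially the same route as the paper: Statement 1 by unfolding the definition \ttg{eq:solop} of $S$ together with Lemma~\ref{le:lambdapos}, Statement 2 from the fact that the nonzero spectrum of the compact operator $S\gamma$ cannot accumulate away from $0$, and Statement 3 from the Rayleigh-quotient (Courant--Fischer--Weyl) characterization of the largest eigenvalue of the selfadjoint operator $S\gamma$. Your extra remarks (positive semidefiniteness and the degenerate case $\gamma\equiv 0$) are harmless refinements of the same argument.
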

\begin{proof}
1.\ Definition \ttg{eq:solop}
yields identity $(\gamma u_i,\gamma v)_H = ( S\gamma u_i, v)_V$
for all $v \in V$. Hence, the equality \ttg{eq:EP1} can be rewritten as
$(u_i,v)_V = \lambda_i (S\gamma u_i,v)_V$, which is equivalent
to \ttg{eq:EP2} with $\mu_i = 1/\lambda_i$ provided that $\lambda_i \neq 0$
and $\mu_i \neq 0$.
Since Lemma~\ref{le:lambdapos} guarantees $\lambda_i > 0$ for all $i=1,2,\dots$,
the only condition is $\mu_i \neq 0$.

2.\ If we denote the spectrum of $S\gamma$ by $\sigma(S\gamma)$
then the compactness of $S\gamma$ implies that the set
$[\varepsilon,\infty) \cap \sigma(S\gamma)$ is finite
for any $\varepsilon > 0$, see
\cite[Theorem~4.24 (b)]{Rudin_FA_91}.
The statement follows immediately from the fact that
$\lambda_i = 1/\mu_i$ for $\mu_i \neq 0$.

3.\ Since $S\gamma$ is selfadjoint, see~\ttg{selfadjoint},
the Courant--Fischer--Weyl min-max principle, see, e.g.,~\cite{Strang_Fix_analysis_FEM_08},
implies that
$$
  \mu_1 = \sup \{ (S\gamma v,v)_V : \norm{v}_V = 1 \}
  = \sup\limits_{v\in V, v\neq 0} \frac{(S\gamma v, v)_V}{\norm{v}_V^2}
  = \sup\limits_{v\in V, v\neq 0} \frac{\norm{\gamma v}_H^2}{\norm{v}_V^2}
$$
is finite and it is the largest eigenvalue of the operator $S\gamma$.
Consequently,
\begin{equation}
\label{eq:lambda1inf}
  \lambda_1 = \mu_1^{-1}
= \left( \sup\limits_{v\in V, v\neq 0} \frac{\norm{\gamma v}_H^2}{\norm{v}_V^2}
  \right)^{-1}
= \inf\limits_{v\in V, v\neq 0} \frac{\norm{v}_V^2}{\norm{\gamma v}_H^2}
\end{equation}
is the smallest eigenvalue of problem \ttg{eq:EP1}.
\end{proof}

\section{Abstract inequality of Friedrichs'--Poincar\'e type}
\label{se:absineq}

\subsection{The proof of the abstract inequality}
Properties of the eigenproblem \ttg{eq:EP1} can be utilized in a simple
way to derive an abstract inequality of Friedrichs'--Poincar\'e type.
The Hilbert space versions of the particular Friedrichs', Poincar\'e,
trace, Korn's and similar inequalities easily follow from this
abstract result. For examples see Sections~\ref{se:Fried}--\ref{se:trace}.

\begin{theorem}[Abstract inequality]
\label{th:absineq}
Let $\gamma : V \rightarrow H$ be a continuous, linear, and compact
operator between Hilbert spaces $V$ and $H$.
Let $\lambda_1$ be the smallest eigenvalue of the problem \ttg{eq:EP1}.
Then
\begin{equation}
\label{eq:absineq}
  \norm{\gamma v}_H \leq C_\gamma \norm{v}_V \quad \forall v \in V
\end{equation}
with $C_\gamma = \lambda_1^{-1/2}$.
Moreover, this constant is optimal in the sense that it is the smallest
possible constant such that \ttg{eq:absineq} holds for all $v\in V$.
\end{theorem}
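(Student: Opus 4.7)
The plan is to read the inequality off directly from the variational characterization of $\lambda_1$ established in Lemma~\ref{le:Sgamma}, part~3. For $v=0$ the estimate \ttg{eq:absineq} is trivial, so I may restrict to $v\neq 0$. The identity $\lambda_1 = \inf_{v\in V, v\neq 0} \norm{v}_V^2/\norm{\gamma v}_H^2$ rearranges to $\norm{\gamma v}_H^2 \leq \lambda_1^{-1} \norm{v}_V^2$, and taking square roots gives \ttg{eq:absineq} with $C_\gamma = \lambda_1^{-1/2}$. Lemma~\ref{le:lambdapos} ensures $\lambda_1>0$, so this constant is finite and positive.

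To prove optimality, I would exhibit a vector that saturates the bound, the natural candidate being the first eigenvector $u_1$ itself. Substituting $v=u_1$ into the defining relation \ttg{eq:EP1} yields $\norm{u_1}_V^2 = \lambda_1 \norm{\gamma u_1}_H^2$, which is precisely equality in \ttg{eq:absineq}. Since $u_1\neq 0$ and $\gamma u_1 \neq 0$ by Lemma~\ref{le:lambdapos}, any constant strictly smaller than $\lambda_1^{-1/2}$ would fail the inequality at $v=u_1$, so $C_\gamma=\lambda_1^{-1/2}$ cannot be improved.

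There is no real obstacle; the whole argument is an essentially one-line consequence of Lemma~\ref{le:Sgamma}. The only subtle point is that the optimality conclusion relies on \emph{attainment} of the infimum rather than merely its value, which is why invoking the actual existence of the eigenvector $u_1$ coming from the Hilbert--Schmidt decomposition \ttg{eq:Vdecomp} and the correspondence in Lemma~\ref{le:Sgamma}, part~1, is what makes the argument rigorous.
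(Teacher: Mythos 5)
Your proposal is correct and follows essentially the same route as the paper: the inequality is read off from the variational characterization \ttg{eq:lambda1inf} of $\lambda_1$ (Lemma~\ref{le:Sgamma}, part~3), and optimality is obtained by observing that $v=u_1$ achieves equality. Your extra remarks on attainment of the infimum via the existence of $u_1$ only make explicit what the paper's one-line argument implicitly uses.
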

\begin{proof}
The validity of the abstract inequality follows immediately from \ttg{eq:lambda1inf}:
$$
  \norm{\gamma v}_H^2 \leq \lambda_1^{-1} \norm{v}_V^2  \quad\forall v \in V.
$$
This inequality holds as the equality for $v=u_1$ and thus, the constant
$C_\gamma = \lambda_1^{-1/2}$ is optimal.
\end{proof}

\subsection{Upper bound on the smallest eigenvalue}
\label{se:absupper}

The upper bound on $\lambda_1$ can be computed by the standard
Galerkin method, which is both accurate and efficient
\cite{BabOsb:1989,BabOsb:1991,Boffi:2010}. This method is based on the projection of the eigenproblem
\ttg{eq:EP1} into a finite dimensional subspace $V^h \subset V$.
We seek eigenvectors $u_i^h \in V^h$, $u_i^h \neq 0$, and eigenvalues
$\lambda_i^h$ such that
\begin{equation}
\label{eq:EP1h}
(u_i^h,v^h)_V = \lambda_i^h (\gamma u_i^h, \gamma v^h)_H \quad\forall v^h \in V^h.
\end{equation}
Let $\{\varphi_j\}_{j=1,\dots,N}$ be a basis of the space $V^h$. Then, we can formulate problem \ttg{eq:EP1h} equivalently as a generalized eigenvalue problem
$$
  A y_i = \lambda_i^h M y_i
$$
for matrices $A$ and $M$ with entries
$$
  A_{jk} = (\varphi_k,\varphi_j)_V
\quad\text{and}\quad
  M_{jk} = (\gamma \varphi_k, \gamma \varphi_j)_H,
\quad j,k=1,2,\dots,N.
$$
The eigenvectors $y_i \in \R^N$ and $u_i^h \in V^h$ are linked by the relation
$u_i^h = \sum_{j=1}^N y_{i,j} \varphi_j$.
The generalized matrix eigenvalue problem can be solved by efficient
methods of numerical linear algebra \cite{BaiDemDonRuhVor:2000}.

The Galerkin method for eigenvalue problems is very well understood.
The convergence and the speed of convergence of this method is established
for example in \cite{BabOsb:1989,BabOsb:1991,Boffi:2010}.
It is well known \cite{Boffi:2010} that the Galerkin method
approximates the exact eigenvalues from above, hence
$$
  \lambda_i \leq \lambda_i^h, \quad\forall i=1,2,\dots.
$$
In particular, the upper bound on the smallest eigenvalue $\lambda_1$
and the corresponding lower bound on the optimal constant $C_\gamma$ read
\begin{equation}
\label{eq:abslowb}
  \lambda_1 \leq \lambda_1^h
\quad\text{and}\quad
(\lambda_1^h)^{-1/2} \leq C_\gamma.
\end{equation}

\subsection{Lower bound on the smallest eigenvalue}
\label{se:lbound}

In this part we concentrate on a computable
lower bound on the smallest eigenvalue $\lambda_1$ of the problem~\ttg{eq:EP1}.
First, we formulate an auxiliary result, which states that the images $\gamma u_i$, $i=1,2,\dots$
of the orthogonal system of eigenvectors $u_i$ satisfy the Parseval's identity.
\begin{lemma}
\label{le:parseval}
Let $u_i$, $i=1,2,\dots$, be the above specified orthogonal system of
eigenvectors of the operator $S\gamma$ corresponding to nonzero eigenvalues.
Let these eigenvectors be normalized as in \ttg{eq:normalization}.
Let $u_* \in V$ be arbitrary.
Then
$$
  \norm{ \gamma u_* }_H^2
 =\sum\limits_{i=1}^{\infty} | (\gamma u_*, \gamma u_i)_H |^2.
$$
\end{lemma}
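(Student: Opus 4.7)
The plan is to reduce the claimed identity to the standard Parseval identity in $H$ by projecting $u_*$ onto the subspace $\cM$ of~\ttg{eq:Vdecomp} and discarding the kernel part, which disappears under $\gamma$.

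First I would use the Hilbert space decomposition $V = \cM \oplus \ker(S\gamma)$ provided by the Hilbert--Schmidt spectral theorem to write $u_* = u_\cM + u_0$ with $u_\cM \in \cM$ and $u_0 \in \ker(S\gamma)$. The crucial preliminary observation is that $\gamma u_0 = 0$: indeed, if $S\gamma u_0 = 0$, then the defining identity \ttg{eq:solop} with $f = \gamma u_0$ gives $(S\gamma u_0, v)_V = (\gamma u_0, \gamma v)_H$ for every $v\in V$, and testing with $v = u_0$ yields $\norm{\gamma u_0}_H^2 = 0$. Consequently $\gamma u_* = \gamma u_\cM$, and the inner products on the right-hand side of the claim satisfy $(\gamma u_*, \gamma u_i)_H = (\gamma u_\cM, \gamma u_i)_H$ for every $i$.

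Next I would expand $u_\cM$ in the orthogonal basis of $\cM$. Since $\cM$ is the closed span of the eigenvectors $u_1, u_2, \dots$, which are orthogonal in $V$, the spectral theorem provides coefficients $\alpha_i \in \R$ with $u_\cM = \sum_{i=1}^\infty \alpha_i u_i$ converging in $V$. Applying the continuous operator $\gamma$ and using the orthonormality \ttg{eq:normalization} of $\{\gamma u_i\}$ in $H$, I would obtain $\gamma u_\cM = \sum_{i=1}^\infty \alpha_i \gamma u_i$ in $H$ and $\alpha_i = (\gamma u_\cM, \gamma u_i)_H$. The standard Parseval identity for the orthonormal system $\{\gamma u_i\}$ then gives
\[
\norm{\gamma u_\cM}_H^2 = \sum_{i=1}^\infty \alpha_i^2 = \sum_{i=1}^\infty |(\gamma u_\cM, \gamma u_i)_H|^2,
\]
and combining this with $\gamma u_* = \gamma u_\cM$ and $(\gamma u_*, \gamma u_i)_H = (\gamma u_\cM, \gamma u_i)_H$ yields the claim.

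The main subtle point, and the only place where care is required, is the step $\gamma u_0 = 0$: one has to notice that the kernel of $S\gamma$ coincides with the kernel of $\gamma$, which is what decouples the expansion from the $\ker(S\gamma)$ component. Once this is clear, the rest is the orthonormal expansion in $H$ of an element of $\overline{\gamma(\cM)}$, and no additional analytic difficulty arises because orthonormality of the images $\gamma u_i$ in $H$ follows immediately from~\ttg{eq:normalization} without needing the $V$-orthonormality of the $u_i$ themselves.
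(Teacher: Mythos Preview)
Your proposal is correct and follows essentially the same route as the paper: decompose $u_*$ via $V = \cM \oplus \ker(S\gamma)$, observe from \ttg{eq:solop} that the kernel component has vanishing image under $\gamma$, and then apply the ordinary Parseval identity for the orthonormal system $\{\gamma u_i\}$ in $H$. The only cosmetic difference is that the paper applies Parseval directly in the image space $\gamma\cM$, whereas you first expand $u_\cM$ in the $V$-orthogonal basis and then push through $\gamma$; the two are equivalent.
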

\begin{proof}
Due to the decomposition \ttg{eq:Vdecomp},
there exist unique components $u_*^\cM \in \cM$ and $u_*^0 \in \ker(S\gamma)$
such that $u_* = u_*^\cM + u_*^0$.
Since $S\gamma u_*^0 = 0$, we have $0 = (S\gamma u_*^0, u_*^0)_V =
(\gamma u_*^0, \gamma u_*^0)_H$ by \ttg{eq:solop}
and hence $\gamma u_*^0 = 0$.
Consequently, $\gamma u_* = \gamma u_*^\cM$.

System $\gamma u_i$, $i=1,2,\dots$, forms an orthonormal basis in $\gamma\cM$.
Thus, we can use the standard Parseval's identity in $\gamma\cM$ \cite[Theorem 2, Chapter III, Section 4]{Yosida_FA_94}
to obtain
$$
  \norm{ \gamma u_* }_H^2 = \norm{ \gamma u_*^\cM }_H^2
  = \sum\limits_{i=1}^{\infty} | (\gamma u_*^\cM, \gamma u_i)_H |^2
  = \sum\limits_{i=1}^{\infty} | (\gamma u_*, \gamma u_i)_H |^2.
$$
\end{proof}

The derivation of the lower bound on $\lambda_1$ is based on
the method of a priori-a posteriori inequalities.
This method relies on an abstract theorem proved in \cite{KutSig:1978}.
We formulate this theorem in the setting of this paper
and for the readers' convenience we present its brief proof.
Notice that in contrast to \cite{KutSig:1978}, Theorem~\ref{le:KutSig:1978}
operates with a pair of Hilbert spaces and with a compact operator
between them.
\begin{theorem}
\label{le:KutSig:1978}
Let $\gamma : V \rightarrow H$ be a continuous, linear, and compact
operator between Hilbert spaces $V$ and $H$.
Let $u_* \in V$ and $\lambda_* \in \R$ be arbitrary. Let $\lambda_i$, $i=1,2,\dots$, be eigenvalues of~\ttg{eq:EP1}.
Let us consider $w\in V$ such that
\begin{equation}
\label{eq:RW}
  (w,v)_V = (u_*,v)_V - \lambda_*(\gamma u_*, \gamma v)_H
  \quad \forall v \in V.
\end{equation}
If $\gamma u_* \neq 0$ then
\begin{equation}
\label{eq:KutSigest}
  \min\limits_{i} \left| \frac{\lambda_i - \lambda_*}{\lambda_i} \right|
  \leq \frac{\norm{\gamma w}_H}{\norm{\gamma u_*}_H}.
\end{equation}
\end{theorem}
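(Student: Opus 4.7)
The plan is to expand everything in the orthonormal system $\{\gamma u_i\}$ provided by the eigenvectors of $S\gamma$ and to extract a min-type bound via Parseval's identity (Lemma~\ref{le:parseval}). Set $\alpha_i = (\gamma u_*, \gamma u_i)_H$ so that, by Lemma~\ref{le:parseval}, $\norm{\gamma u_*}_H^2 = \sum_{i=1}^\infty |\alpha_i|^2$, which is strictly positive by the hypothesis $\gamma u_* \neq 0$.

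Next I would compute the Fourier coefficients of $\gamma w$ with respect to the same system. Testing \ttg{eq:RW} with $v = u_j$ gives $(w,u_j)_V = (u_*,u_j)_V - \lambda_* \alpha_j$. Applying the eigenvalue identity \ttg{eq:EP1} for $u_j$ with test function $u_*$ yields $(u_*,u_j)_V = \lambda_j (\gamma u_j, \gamma u_*)_H = \lambda_j \alpha_j$, so that $(w,u_j)_V = (\lambda_j - \lambda_*)\alpha_j$. A second use of \ttg{eq:EP1}, now with test function $w$, gives $(u_j, w)_V = \lambda_j (\gamma u_j, \gamma w)_H$, and combining these produces
\begin{equation*}
  (\gamma w, \gamma u_j)_H = \frac{\lambda_j - \lambda_*}{\lambda_j}\,\alpha_j.
\end{equation*}

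Applying Parseval (Lemma~\ref{le:parseval}) to $\gamma w$ now yields
\begin{equation*}
  \norm{\gamma w}_H^2
  = \sum_{j=1}^\infty \left|\frac{\lambda_j - \lambda_*}{\lambda_j}\right|^2 |\alpha_j|^2
  \geq \left(\min_{j} \left|\frac{\lambda_j - \lambda_*}{\lambda_j}\right|\right)^2
       \sum_{j=1}^\infty |\alpha_j|^2
  = \left(\min_{j} \left|\frac{\lambda_j - \lambda_*}{\lambda_j}\right|\right)^2 \norm{\gamma u_*}_H^2.
\end{equation*}
Dividing by $\norm{\gamma u_*}_H^2 > 0$ and taking square roots gives \ttg{eq:KutSigest}.

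The one point that deserves care is the legitimacy of the ``$\min$'' over an infinite index set: a priori the infimum might be approached only in the limit and not attained. However, Lemma~\ref{le:Sgamma}(2) guarantees that $\lambda_j \to \infty$, so $(\lambda_j - \lambda_*)/\lambda_j \to 1$, and hence the infimum is attained at some finite index (or trivially equals $1$). This justifies the minimum and completes the argument.
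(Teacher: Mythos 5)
Your proof is correct and follows essentially the same route as the paper's: both arguments compute the Fourier coefficients $(\gamma w,\gamma u_i)_H = \frac{\lambda_i-\lambda_*}{\lambda_i}(\gamma u_*,\gamma u_i)_H$ from \ttg{eq:RW} and \ttg{eq:EP1}, and then apply Lemma~\ref{le:parseval} to both $\gamma w$ and $\gamma u_*$ to pull out the minimum. Your closing remark justifying the attainment of the minimum via Lemma~\ref{le:Sgamma} is a small point the paper leaves implicit.
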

\begin{proof}
Using Lemma~\ref{le:parseval} and definitions \ttg{eq:EP1} and \ttg{eq:RW}, we obtain
\begin{align*}
  \min\limits_{i} \left| \frac{\lambda_i - \lambda_*}{\lambda_i} \right|^2
  \norm{\gamma u_*}_H^2
&\leq \sum\limits_{i=1}^\infty \left| \frac{\lambda_i - \lambda_*}{\lambda_i}
  (\gamma u_*, \gamma u_i)_H \right|^2
\\
&= \sum\limits_{i=1}^\infty \left| \frac{(u_i,u_*)_V}{\lambda_i}
    - \frac{(u_* - w, u_i)_V}{\lambda_i} \right|^2
\\
&= \sum\limits_{i=1}^\infty \left| \frac{(w,u_i)_V}{\lambda_i} \right|^2
 = \sum\limits_{i=1}^\infty \left| (\gamma w,\gamma u_i)_H \right|^2
 = \norm{\gamma w}_H^2.
\end{align*}
\end{proof}

In order to obtain a computable lower bound on $\lambda_1$, we combine
the estimate \ttg{eq:KutSigest} with a complementarity technique,
see Sections~\ref{se:Fried}--\ref{se:trace}.
The bounds derived by the complementarity technique depend
on the particular choice of spaces $V$ and $H$,
but they have a common general structure.
The following theorem utilizes this general structure and presents
an abstract lower bound on $\lambda_1$.

\begin{theorem}[Abstract complementarity estimate]
\label{le:abscompl}
Let $u_*\in V$, $\lambda_* \in \R$ be arbitrary and let $w\in V$
satisfy \ttg{eq:RW}.
Let $\lambda_1$ be the smallest eigenvalue of \ttg{eq:EP1}
and
let the relatively closest eigenvalue to $\lambda_*$ be $\lambda_1$,
i.e. let
\begin{equation}
\label{eq:closest}
 \left|\frac{\lambda_1 - \lambda_*}{\lambda_1}\right|
\leq
  \left|\frac{\lambda_i - \lambda_*}{\lambda_i}\right|
\quad \forall i=1,2,\dots.
\end{equation}
Further, let $A \geq 0$ and $B \geq 0$ be such that
\begin{equation}
\label{eq:cond_B}
  B < \lambda_* \norm{\gamma u_*}_H
\end{equation}
and
\begin{equation}
\label{eq:abscompl}
  \norm{w}_V \leq A + C_\gamma B,
\end{equation}
where $C_\gamma$
is the optimal constant from \ttg{eq:absineq}.
Then
\begin{equation}
\label{eq:absbound}
  X_2^2 \leq \lambda_1
\quad\text{and}\quad
C_\gamma \leq 1/X_2,
\end{equation}
where
\begin{equation}
\label{eq:X2}
X_2 = \frac12 \left( -\alpha + \sqrt{\alpha^2 + 4(\lambda_* - \beta)} \right),
\quad \alpha=\frac{A}{\norm{\gamma u_*}_H}, \quad \text{and}
\quad \beta=\frac{B}{\norm{\gamma u_*}_H}.
\end{equation}
\end{theorem}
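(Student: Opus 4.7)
My plan is to apply Theorem~\ref{le:KutSig:1978} to the pair $(u_*,\lambda_*)$ so as to turn the residual $w$ into an a posteriori bound on the relative distance of $\lambda_*$ to the spectrum, then to invoke the abstract inequality \ttg{eq:absineq} to control $\norm{\gamma w}_H$ by $\norm{w}_V$, and finally to solve a quadratic inequality in $\sqrt{\lambda_1}$.

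Concretely, Theorem~\ref{le:KutSig:1978} combined with the relative-closeness assumption \ttg{eq:closest} yields
\[
  \frac{|\lambda_1-\lambda_*|}{\lambda_1}\leq\frac{\norm{\gamma w}_H}{\norm{\gamma u_*}_H}.
\]
Applying \ttg{eq:absineq} to $w$ gives $\norm{\gamma w}_H\leq C_\gamma\norm{w}_V$, and then \ttg{eq:abscompl} together with the identity $C_\gamma=\lambda_1^{-1/2}$ from Theorem~\ref{th:absineq} lets me bound the right-hand side by $\alpha/\sqrt{\lambda_1}+\beta/\lambda_1$. Multiplying through by $\lambda_1$ I arrive at the key inequality
\[
  |\lambda_1-\lambda_*|\leq\alpha\sqrt{\lambda_1}+\beta.
\]

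The main obstacle is then removing the absolute value, since the sign of $\lambda_1-\lambda_*$ is not known a priori, so I would argue by cases. If $\lambda_1\leq\lambda_*$, setting $x=\sqrt{\lambda_1}\geq 0$ the inequality becomes $x^2+\alpha x-(\lambda_*-\beta)\geq 0$; the hypothesis \ttg{eq:cond_B} guarantees $\lambda_*-\beta>0$, so this quadratic in $x$ has a unique nonnegative root, which is precisely the quantity $X_2$ defined in \ttg{eq:X2}, and hence $\sqrt{\lambda_1}\geq X_2$. In the complementary case $\lambda_1>\lambda_*$, the defining identity $X_2^2+\alpha X_2=\lambda_*-\beta$ already yields the trivial bound $X_2^2\leq\lambda_*-\beta\leq\lambda_*<\lambda_1$. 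In either case $X_2^2\leq\lambda_1$, and the second conclusion $C_\gamma\leq 1/X_2$ follows by taking the reciprocal square root. The subtle point throughout is that condition \ttg{eq:cond_B} plays a double role: it ensures $X_2$ is real and positive, and it is also what makes the crude estimate $X_2^2\leq\lambda_*-\beta$ strong enough to cover the ``wrong sign'' case.
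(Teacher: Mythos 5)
Your proposal is correct and follows essentially the same route as the paper: Theorem~\ref{le:KutSig:1978} plus \ttg{eq:closest}, then \ttg{eq:absineq} and \ttg{eq:abscompl}, reduced to the quadratic $x^2+\alpha x-(\lambda_*-\beta)\geq 0$ in $x=\sqrt{\lambda_1}=1/C_\gamma$. The only cosmetic difference is that the paper sidesteps your case analysis by bounding the signed quantity $(\lambda_*-\lambda_1)/\lambda_1$ directly by the minimum of the absolute values (using $y\leq|y|$), whereas you keep the absolute value and dispose of the case $\lambda_1>\lambda_*$ via the observation $X_2^2\leq\lambda_*-\beta<\lambda_1$; both are valid.
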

\begin{proof}
By using the fact that $C_\gamma = \lambda_1^{-1/2}$,
the estimate \ttg{eq:KutSigest}, assumptions of the theorem,
and the inequality \ttg{eq:absineq},
we obtain the validity of the following relation
$$
  \lambda_* C_\gamma^2 - 1
=
  \frac{\lambda_* - \lambda_1}{\lambda_1}
\leq
  \min\limits_i \left| \frac{\lambda_i - \lambda_*}{\lambda_i} \right|
\leq
  \frac{\norm{\gamma w}_H}{\norm{\gamma u_*}_H}
\leq
  C_\gamma \frac{\norm{w}_V}{\norm{\gamma u_*}_H}
\leq
  C_\gamma \alpha + C_\gamma^2 \beta.
$$
This is equivalent to the quadratic inequality
$$
  0 \leq C_\gamma^2(\beta-\lambda_*) + C_\gamma \alpha + 1.
$$
By solving it for $C_\gamma$ under the assumption \ttg{eq:cond_B},
we conclude that this inequality
is not satisfied for $C_\gamma > 1/X_2$.
Thus, $C_\gamma$ has to be at most $1/X_2$.
\end{proof}

The particular complementarity estimates have the form \ttg{eq:abscompl},
where the numbers $A$ and $B$ are obtained by an approximate minimization
procedure, see Sections~\ref{se:Fried}--\ref{se:trace}.
The better approximation of the exact minimizer we compute
the tighter bound \ttg{eq:abscompl} and consequently \ttg{eq:absbound}
we obtain. The exact minimizer yields equality in \ttg{eq:abscompl}
and $B = 0$. Therefore, the assumption \ttg{eq:cond_B} can always be satisfied by computing sufficiently accurate minimizer.

The assumption \ttg{eq:closest} is crucial and cannot be guaranteed unless
lower bounds on $\lambda_1$ and $\lambda_2$ are known.
However, since the Galerkin method is known to converge
\cite{BabOsb:1989,BabOsb:1991,Boffi:2010}
with a known speed, very accurate approximations of $\lambda_1$ and $\lambda_2$
can be computed. If these approximations are well separated, they can be used in \ttg{eq:closest} to verify its validity
with a good confidence.

In order to increase this confidence, we propose a test.
This test is based on the following observation.
Let $\lil \leq \lambda_1 \leq \lambda^* \leq \liil \leq \lambda_2 \leq \liiu$,
$D_1 = (\lambda_* - \lil)/\lil$ and $D_2 = (\liil-\lambda_*)/\liiu$.
Then inequality $D_1 \leq D_2$ implies the assumption \ttg{eq:closest}.
Indeed, if all these inequalities are satisfied then, clearly,
$\left| (\lambda_1 - \lambda_*)/\lambda_1 \right| \leq D_1
  \leq  D_2 \leq \left| (\lambda_2 - \lambda_*)/\lambda_2 \right|
  \leq |(\lambda_i - \lambda_*)/\lambda_i|$
 for all $i=2,3,\dots$ and \ttg{eq:closest} holds true.

Thus, if we knew guaranteed lower bounds $\lil$ and $\liil$
on the first eigenvalue $\lambda_1$ and the second eigenvalue $\lambda_2$
of  \ttg{eq:EP1}, respectively, then we could use the Galerkin method to compute
upper bounds $\lambda^*$ and $\liiu$, check if  $\lambda^* \leq \liil$
and the inequality $D_1 \leq D_2$ would then guarantee the validity of
\ttg{eq:closest}.
However, the guaranteed lower bounds $\lil$ and $\liil$ are not available.

Therefore, in practice we propose to set $\lil = X_2^2$, see \ttg{eq:absbound},
$\liil = (\lambda_* + \liiu)/2$, and compute $D_1$ and $D_2$ with these values.
This yields the following diagnostics indicating
the validity of the assumption \ttg{eq:closest}.
If $D_2 \leq 0$ or $D_1 > D_2$ then it is highly probable that some of the
assumptions is not satisfied and the results should not be trusted.
On the other hand if $D_2 > 0$ and $D_1$ is several times smaller than $D_2$
then we can have a good confidence in the validity of \ttg{eq:closest}.
This diagnostics performs very well in all numerical experiments presented below.
At the early stages of the computations the approximations are not very precise
and the diagnostics showed that the results are untrustworthy.
However, at the final stages of the computations
the value $D_1$ is at least five times smaller than $D_2$ providing good
confidence in the validity of \ttg{eq:closest}.

\section{Application to Friedrichs' inequality}
\label{se:Fried}

In this section we apply the above general theory to the case of Friedrichs'
inequality. We will consider the variant of Friedrichs' inequality that is
suitable for general symmetric second-order linear elliptic differential operators.
First, we introduce differential operators of this general type, see Subsection~\ref{se:elloper},
and provide the corresponding Friedrichs' inequality, see Subsection~\ref{se:fioc}.
In Subsection~\ref{se:Friedbounds}
we derive two-sided bounds on the optimal constant in Friedrichs'
inequality that are based on the general theory and on the complementarity
technique. In Subsection~\ref{se:Friedflux} certain computational issues are discussed and finally, in Subsection~\ref{se:Friedexper} we present numerical
results.

\subsection{A general symmetric second-order linear elliptic operator}
\label{se:elloper}
Let us consider a domain $\Omega \subset \R^d$ with Lipschitz boundary
$\partial\Omega$.
Let $\partial\Omega$ consist of two relatively open parts
$\GammaD$ and $\GammaN$ such that
$\partial\Omega = \oGammaD \cup \oGammaN$ and
$\GammaD \cap \GammaN = \emptyset$.
Note that we admit the case where either $\GammaD$ or $\GammaN$ is empty.
Further, let us consider a matrix function
$\cA \in [L^\infty(\Omega)]^{d\times d}$, coefficients
$c \in L^\infty(\Omega)$, and $\alpha \in L^\infty(\GammaN)$.
Matrix $\cA$ is assumed to be symmetric and uniformly positive definite,
i.e. there exists a constant $C>0$ such that
$$
  \b{\xi}^T \cA(x) \b{\xi} \geq C |\b{\xi}|^2 \quad\forall \b{\xi} \in \R^d
  \text{ and for a.a. } x\in\Omega,
$$
where $|\cdot|$ stands for the Euclidean norm.
Coefficients $c$ and $\alpha$ are considered to be nonnegative.

Further, we introduce a subspace
$$
  H^1_{\GammaD}(\Omega) = \{ v \in H^1(\Omega) : v = 0 \text{ on } \GammaD
  \text{ in the sense of traces} \}
$$
of the Sobolev space $H^1(\Omega)$ of $L^2(\Omega)$ functions with square
integrable distributional derivatives.
In what follows, we use the notation $(\cdot,\cdot)$ and $(\cdot,\cdot)_{L^2(\GammaN)}$
for the $L^2(\Omega)$ and $L^2(\GammaN)$ scalar products, respectively.
Using this notation, we define a bilinear form
\begin{equation}
\label{eq:blf}
  a(u,v) = (\cA \nabla u,\nabla v) + (c u, v) + (\alpha u, v)_{L^2(\GammaN)}.
\end{equation}

\subsection{Friedrichs' inequality and the optimal constant}
\label{se:fioc}
The bilinear form~\ttg{eq:blf} is a scalar product in $H^1_{\GammaD}(\Omega)$ under the conditions presented in the following lemma.
Its proof follows for instance from \cite[Theorem~5.11.2]{Kuf_John_Fucik_Function_spaces}.
\begin{lemma}
\label{le:scalprod}
The bilinear form defined in \ttg{eq:blf} is a scalar product
in $H^1_{\GammaD}(\Omega)$ provided that at least one of the following conditions
is satisfied:
\begin{itemize}
\item[$\mathrm{(a)}$] $\operatorname{meas}_{d-1} \GammaD > 0$,
\item[$\mathrm{(b)}$] there exists a nonempty ball $B \subset \Omega$ such that
$c > 0$ on $B$,
\item[$\mathrm{(c)}$] there exists a subset $\Gamma_\alpha \subset \GammaN$ such that
$\operatorname{meas}_{d-1} \Gamma_\alpha > 0$ and $\alpha > 0$ on $\Gamma_\alpha$.
\end{itemize}
\end{lemma}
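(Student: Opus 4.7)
The plan is to verify the four defining properties of a scalar product for the bilinear form $a$ on $H^1_{\GammaD}(\Omega)$. Bilinearity is immediate from the linearity of $\nabla$, of the trace operator, and of the $L^2$-integrals, and symmetry follows from the symmetry of the matrix $\cA$ combined with the symmetry of the $L^2(\Omega)$ and $L^2(\GammaN)$ inner products. Non-negativity is also routine: the uniform positive definiteness of $\cA$ yields $(\cA\nabla v,\nabla v)\geq C\|\nabla v\|_{L^2(\Omega)}^2\geq 0$, while $c\geq 0$ a.e.\ in $\Omega$ and $\alpha\geq 0$ a.e.\ on $\GammaN$ imply $(cv,v)\geq 0$ and $(\alpha v,v)_{L^2(\GammaN)}\geq 0$. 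Adding these three nonnegative contributions gives $a(v,v)\geq 0$ for every $v\in H^1_{\GammaD}(\Omega)$.

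The real content is positive definiteness, i.e.\ showing that $a(v,v)=0$ forces $v=0$. The key observation is that, since all three summands in $a(v,v)$ are individually nonnegative, each one must vanish. From $(\cA\nabla v,\nabla v)=0$ and the uniform positive definiteness of $\cA$, I conclude $\nabla v=0$ a.e.\ in $\Omega$, so that $v$ is locally constant and, on each connected component of $\Omega$, equal to some constant. It then suffices to rule out nonzero constants using whichever of the three conditions (a), (b), (c) is assumed. Under (a), the vanishing trace of $v$ on $\GammaD$ together with $\operatorname{meas}_{d-1}\GammaD>0$ forces the constant to be zero on any component whose boundary meets $\GammaD$. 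Under (b), the identity $(cv,v)=0$ with $c>0$ on the ball $B$ implies $v=0$ a.e.\ on $B$, so the constant vanishes on the component containing $B$. Under (c), $(\alpha v,v)_{L^2(\GammaN)}=0$ together with $\alpha>0$ on $\Gamma_\alpha$ with $\operatorname{meas}_{d-1}\Gamma_\alpha>0$ forces the trace of $v$ to vanish on $\Gamma_\alpha$, which again fixes the constant to be zero on the adjacent component.

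The main obstacle, and the only nontrivial part, is cleanly handling the case in which $\Omega$ has several connected components, since the argument above directly kills $v$ only on the component touched by $\GammaD$, $B$, or $\Gamma_\alpha$. This is the standard technical point that the cited reference \cite[Thm.~5.11.2]{Kuf_John_Fucik_Function_spaces} takes care of, typically by folding a connectedness-type hypothesis into the definition of a domain or by requiring the distinguishing set to intersect every component. Once that is accepted, the remainder of the proof is a short bookkeeping exercise assembling the three cases, and the positivity, symmetry, and bilinearity of $a$ combine to yield the scalar product property on $H^1_{\GammaD}(\Omega)$.
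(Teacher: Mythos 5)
Your argument is correct and complete. The paper itself does not supply a proof of this lemma at all; it simply refers the reader to \cite[Theorem~5.11.2]{Kuf_John_Fucik_Function_spaces}, so your write-up is strictly more self-contained than what appears in the text. Your structure is the natural one: bilinearity, symmetry, and non-negativity are routine from the symmetry and uniform positive definiteness of $\cA$ and the sign conditions $c\geq 0$, $\alpha\geq 0$; the substance is definiteness, which you correctly reduce to the observation that $a(v,v)=0$ forces each of the three nonnegative summands to vanish separately, so $\nabla v=0$ makes $v$ constant and each of (a), (b), (c) then kills that constant (via the trace on $\GammaD$, via $c>0$ on $B$, or via $\alpha>0$ on $\Gamma_\alpha$, respectively). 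The one obstacle you flag --- several connected components --- is in fact moot in this paper: $\Omega$ is assumed in Section~4.1 to be a \emph{domain}, which in this literature means open and connected, so $v$ is a single global constant and the case analysis closes without any extra hypothesis. With that observation inserted, your proof stands on its own and could replace the citation.
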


In this section we assume that at least one of conditions (a)--(c) of
Lemma~\ref{le:scalprod} is satisfied, hence, that $a({\cdot},{\cdot})$ is a scalar product
in $H^1_{\GammaD}(\Omega)$.
The equivalence of the norm induced by $a(\cdot,\cdot)$ and the standard $H^1$ norm yields completeness and therefore the space $H^1_\GammaD(\Omega)$ equipped with the scalar product $a(\cdot,\cdot)$ is a Hilbert space. This enables to use the theory from Section~\ref{se:abseigenprob}. We set
\begin{equation}
\label{eq:Friedsetting}
 V = H^1_\GammaD(\Omega), \quad
 (u,v)_V = a(u,v), \quad
 H = L^2(\Omega), \quad
 (u,v)_H = (u,v),
\end{equation}
and we define $\gamma: H^1_\GammaD(\Omega) \rightarrow  L^2(\Omega)$
as the identity operator, which is compact due to the Rellich theorem
\cite[Theorem~6.3]{Adams_Sob_spaces_03}.
With this setting and with the notation
$\trinorm{\cdot}$ for the norm induced by $a(\cdot,\cdot)$,
we obtain the validity of Friedrichs' inequality.
\begin{theorem}
\label{th:Fried}
Let the bilinear form $a(\cdot,\cdot)$ given by \ttg{eq:blf} form a scalar product in $H^1_\GammaD(\Omega)$.
Then there exists a constant $\CF > 0$ such that
\begin{equation}
\label{eq:Fried}
  \norm{v}_{L^2(\Omega)} \leq \CF \trinorm{v} \quad \forall v \in H^1_\GammaD(\Omega).
\end{equation}
Moreover, the optimal value of this constant is $\CF = \lambda_1^{-1/2}$, where
$\lambda_1$ is the smallest eigenvalue of the following problem:
find $u_i \in H^1_\GammaD(\Omega)$, $u_i \neq 0$, and $\lambda_i \in\R$
such that
\begin{equation}
\label{eq:Friedeigenp}
  a(u_i,v) = \lambda_i ( u_i, v ) \quad \forall v \in H^1_\GammaD(\Omega).
\end{equation}
\end{theorem}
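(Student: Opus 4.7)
The plan is to reduce the statement directly to the abstract inequality in Theorem~\ref{th:absineq} using the concrete identifications already fixed in~\ttg{eq:Friedsetting}. All the analytic content has been packaged in Section~\ref{se:abseigenprob}; what remains is to verify the three hypotheses of that abstract result, namely that $V$ is a Hilbert space, that $H$ is a Hilbert space, and that $\gamma:V\to H$ is continuous, linear, and compact.

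First, I would check that $(H^1_{\GammaD}(\Omega), a(\cdot,\cdot))$ is a Hilbert space. By assumption one of the conditions (a)--(c) of Lemma~\ref{le:scalprod} holds, so $a(\cdot,\cdot)$ is indeed a scalar product on $H^1_{\GammaD}(\Omega)$. Under any of these conditions the induced norm $\trinorm{\cdot}$ is equivalent to the standard $H^1(\Omega)$ norm on $H^1_{\GammaD}(\Omega)$ (this uses boundedness of $\cA$, $c$, $\alpha$ from above for one inequality, and a classical Poincar\'e/Friedrichs argument exploiting the chosen condition for the other). Since $H^1_{\GammaD}(\Omega)$ is closed in $H^1(\Omega)$ and hence complete, the norm equivalence transports completeness to $\trinorm{\cdot}$. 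The space $H = L^2(\Omega)$ is obviously a Hilbert space.

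Next I would verify that the identity map $\gamma:H^1_{\GammaD}(\Omega)\to L^2(\Omega)$ is continuous and compact. Continuity is immediate from the chain $\norm{v}_{L^2(\Omega)} \leq \norm{v}_{H^1(\Omega)} \leq c\,\trinorm{v}$, again using the norm equivalence above. Compactness is exactly the Rellich--Kondrachov theorem for the bounded Lipschitz domain $\Omega$, which is available since the $H^1$ and $\trinorm{\cdot}$ topologies on $V$ coincide.

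Once these hypotheses are in place, Theorem~\ref{th:absineq} applies and yields $\norm{\gamma v}_H \leq C_\gamma \norm{v}_V$ for all $v\in V$ with optimal $C_\gamma = \lambda_1^{-1/2}$. Reading this back through \ttg{eq:Friedsetting} gives \ttg{eq:Fried} with $\CF = \lambda_1^{-1/2}$. The abstract eigenproblem \ttg{eq:EP1} likewise translates, under the same identifications, into $a(u_i,v) = \lambda_i(u_i,v)$ for all $v\in H^1_{\GammaD}(\Omega)$, which is exactly \ttg{eq:Friedeigenp}; optimality of $\CF$ transfers directly from the corresponding statement in Theorem~\ref{th:absineq}. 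There is no real obstacle in this proof; the only substantive ingredient is the compactness of the embedding $H^1_{\GammaD}(\Omega)\hookrightarrow L^2(\Omega)$, and this is already cited in the paper just before the statement.
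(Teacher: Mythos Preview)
Your proposal is correct and follows essentially the same approach as the paper's own proof: verify via Lemma~\ref{le:scalprod} and the norm equivalence that $H^1_{\GammaD}(\Omega)$ with $a(\cdot,\cdot)$ is a Hilbert space, note compactness of the embedding into $L^2(\Omega)$ from the Rellich theorem, and then invoke Theorem~\ref{th:absineq}. The paper compresses these steps into two sentences, while you spell out the norm-equivalence and continuity details more explicitly, but the logical structure is identical.
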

\begin{proof}
Lemma~\ref{le:scalprod} together with the equivalence of the norms $\trinorm{{\cdot}}$ and $\|{\cdot}\|_{H^1(\Omega)}$ guarantees that $H^1_\GammaD(\Omega)$ with the scalar product $a({\cdot},{\cdot})$
given by~\ttg{eq:blf} is a Hilbert space.
The statement then follows immediately from Theorem~\ref{th:absineq}.
\end{proof}

Let us note that the most common version of Friedrichs' inequality
$$
  \norm{v}_{L^2(\Omega)} \leq \CF \norm{\nabla v}_{L^2(\Omega)}
  \quad \forall v \in H^1_0(\Omega)
$$
follows from \ttg{eq:Fried} with $\GammaD = \partial\Omega$, $\GammaN = \emptyset$,
$\cA$ being identity matrix, $c = 0$, and $\alpha = 0$.
As usual, we denote by $H^1_0(\Omega)$ the space $H^1_\GammaD(\Omega)$ with
$\GammaD = \partial\Omega$.

\subsection{Two-sided bounds on Friedrichs' constant}
\label{se:Friedbounds}

A lower bound on Friedrichs' constant $\CF$ can be efficiently
computed by the Galerkin method. We use the setting \ttg{eq:Friedsetting}
and proceed as it is described in Section~\ref{se:absupper}.
The upper bound on $\CF$ is obtained by the complementarity technique
presented in the following theorem.

Let $\Hdiv$ stands for the space of $d$-dimensional vector fields with
square integrable weak divergences and let $\bn$ be the unit outward-facing normal vector
to the boundary $\partial\Omega$. Further, let
$\norm{\bq}_\cA^2 = (\cA \bq, \bq)$ be a norm in $[L^2(\Omega)]^d$
induced by the matrix $\cA$.

\begin{theorem}\label{th:comp}
Let $V = H^1_\GammaD(\Omega)$, $u_* \in V$ and $\lambda_* \in \R$.
Let the bilinear form $a(\cdot,\cdot)$ given by \ttg{eq:blf} form a scalar product in $V$.
Let $w\in V$ satisfy
\begin{equation}
\label{eq:defw}
  a(w,v) = a(u_*,v) - \lambda_*(u_*,v) \quad \forall v \in V,
\end{equation}
where recall that $(\cdot,\cdot)$ stands for the $L^2(\Omega)$ scalar product.
Then
\begin{equation}
  \label{eq:wbound}
  \trinorm{w} \leq \norm{\nabla u_* - \cA^{-1} \bq}_{\cA}
      + \CF \norm{\lambda_* u_* - c u_* + \ddiv \bq}_{L^2(\Omega)}
\quad \forall \bq \in W,
\end{equation}
where $W = \{ \bq \in \Hdiv : \bq\cdot \bn = -\alpha u_* \text{ on }\GammaN \}$.
\end{theorem}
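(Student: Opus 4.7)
The plan is to bound $\trinorm{w}^2 = a(w,w)$ from above by exploiting the defining identity \eqref{eq:defw} and then introducing the auxiliary flux $\bq$ through integration by parts. Concretely, I would first substitute $v = w$ into \eqref{eq:defw} to get
$$
\trinorm{w}^2 = a(u_*,w) - \lambda_*(u_*,w) = (\cA \nabla u_*, \nabla w) + (c u_*, w) + (\alpha u_*, w)_{L^2(\GammaN)} - \lambda_*(u_*,w),
$$
so that the quantity of interest becomes a sum of volume and boundary terms involving only $u_*$ and $w$.

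Next, I would use the fact that $\bq \in \Hdiv$ allows an integration by parts against $w \in H^1_{\GammaD}(\Omega)$: Green's formula gives $(\bq,\nabla w) = -(\ddiv \bq, w) + \langle \bq\cdot\bn, w\rangle_{\partial\Omega}$, and since $w$ vanishes on $\GammaD$ (in the sense of traces) while the assumption $\bq\cdot\bn = -\alpha u_*$ on $\GammaN$ takes care of the remainder, the boundary integral rewrites as $-(\alpha u_*, w)_{L^2(\GammaN)}$. Substituting this back eliminates the Neumann boundary term and lets me regroup everything into
$$
\trinorm{w}^2 = \bigl(\cA(\nabla u_* - \cA^{-1}\bq),\,\nabla w\bigr) + \bigl(\lambda_* u_* - c u_* + \ddiv \bq,\,-w\bigr)^{\!*},
$$
where the signs are arranged so that the two residual terms match the ones appearing in \eqref{eq:wbound}. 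This algebraic regrouping is the real content of the complementarity trick, though it is routine once the boundary condition on $\bq$ is used.

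Finally, I would apply the Cauchy--Schwarz inequality separately to each of the two terms: the first with respect to the $\cA$-weighted $L^2$ inner product on vector fields yields the factor $\norm{\nabla u_* - \cA^{-1}\bq}_\cA$ times $\norm{\nabla w}_\cA$, while the second yields the factor $\norm{\lambda_* u_* - c u_* + \ddiv \bq}_{L^2(\Omega)}$ times $\norm{w}_{L^2(\Omega)}$. To close the estimate, I would use $\norm{\nabla w}_\cA^2 = (\cA\nabla w,\nabla w) \le a(w,w) = \trinorm{w}^2$ to bound the first companion factor by $\trinorm{w}$, and invoke Friedrichs' inequality \eqref{eq:Fried} (Theorem~\ref{th:Fried}) to bound $\norm{w}_{L^2(\Omega)} \le \CF \trinorm{w}$ in the second. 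Dividing by $\trinorm{w}$ (the inequality being trivial when $w=0$) produces exactly \eqref{eq:wbound} and finishes the proof.

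The only mildly delicate step is the integration by parts: one must be careful that the Dirichlet trace of $w$ vanishes on $\GammaD$ and that the normal trace $\bq\cdot\bn$ is interpreted correctly in $H^{-1/2}(\partial\Omega)$, so that the duality pairing reduces to the $L^2(\GammaN)$ inner product under the assumption $\bq\in W$. Everything else is mechanical application of Cauchy--Schwarz and the already-established Friedrichs inequality.
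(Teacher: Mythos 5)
Your proposal is correct and follows essentially the same route as the paper's proof: test \ttg{eq:defw} with $v=w$, integrate by parts to introduce $\bq$ and use the condition $\bq\cdot\bn=-\alpha u_*$ on $\GammaN$ to cancel the boundary term, regroup into the two residuals, then apply Cauchy--Schwarz, Friedrichs' inequality, and $\norm{\nabla w}_\cA \leq \trinorm{w}$ before dividing by $\trinorm{w}$. No gaps.
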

\begin{proof}
Let us fix any $\bq \in W$, test \ttg{eq:defw} by $v=w$ and
use the divergence theorem to express
\begin{multline*}
  \trinorm{w}^2 = (\cA \nabla u_*,\nabla w) + (c u_*, w)
    + (\alpha u_*, w)_{L^2(\GammaN)} - \lambda_* (u_*,w)
    - (\bq,\nabla w) - (\ddiv\bq, w)
\\
+ (\bq\cdot\bn,w)_{L^2(\GammaN)}
  = \left(\cA (\nabla u_* - \cA^{-1} \bq),\nabla w\right)
  - (\lambda_* u_* - c u_* + \ddiv\bq,w).
\end{multline*}
The Cauchy--Schwarz inequality and Friedrichs' inequality \ttg{eq:Fried}
yield
$$
\trinorm{w}^2 \leq \norm{\nabla u_* - \cA^{-1} \bq}_\cA \norm{\nabla w}_\cA
  + \CF \norm{\lambda_* u_* - c u_* + \ddiv\bq}_{L^2(\Omega)} \trinorm{w}.
$$
Inequality $\norm{\nabla w}_\cA \leq \trinorm{w}$ finishes the proof.
\end{proof}

The estimate \ttg{eq:wbound} is of type \ttg{eq:abscompl} with
\begin{equation}
\label{eq:FriedAB}
A = \norm{\nabla u_* - \cA^{-1} \bq}_{\cA}, \quad
B = \norm{\lambda_* u_* - c u_* + \ddiv \bq}_{L^2(\Omega)},
\end{equation}
and $C_\gamma = \CF$.
The numbers $A$ and $B$ can be readily computed as soon as suitable approximations
$\lambda_*$, $u_*$, and a suitable vector field $\bq \in W$ are in hand.
Estimate \ttg{eq:absbound} then gives a guaranteed upper bound on
Friedrichs' constant $\CF$.

The crucial part is the computation of suitable approximations $\lambda_*$
and $u_*$ of the smallest eigenvalue $\lambda_1$ of problem
\ttg{eq:Friedeigenp} and its corresponding eigenvector $u_1$
such that the inequality \ttg{eq:closest} is satisfied.
Equally crucial is a suitable choice of the vector field $\bq \in W$
in such a way that estimate \ttg{eq:wbound} provides a tight upper bound
on $\trinorm{w}$.
A possible approach and practical details about these issues
are provided in the next section.

\subsection{Flux reconstruction}
\label{se:Friedflux}

In order to compute two-sided bounds on Friedrichs' constant, we proceed
as follows.
First, we use the Galerkin method, see Section~\ref{se:absupper}, to compute
an approximation $\lambda_1^h$ and $u_1^h$ of the eigenpair $\lambda_1$
and $u_1$ of~\ttg{eq:Friedeigenp}, respectively.
We set $\lambda_* = \lambda_1^h$, $u_* = u_1^h$, compute a suitable
vector field $\bq \in W$, and evaluate the numbers $A$ and $B$
by \ttg{eq:FriedAB}. The estimate \ttg{eq:absbound} then provides the upper bound and the Galerkin approximation $\lambda_1^h$ yields the lower bound \ttg{eq:abslowb} on Friedrichs' constant $\CF$.

The key point is the computation of the suitable vector field $\bq$.
For simplicity, we choose a straightforward approach of approximate
minimization of the upper bound \ttg{eq:wbound} with respect to a suitable
subset of $W$.
First, we exploit the affine structure of $W$.
Let us choose an arbitrary but fixed $\overline \bq \in W$.
The practical construction of this $\overline \bq$ is a geometrical issue
depending on $\Omega$ and coefficient $\alpha$.
It suffices to construct a vector field $\overline \bq_1 \in \Hdiv$
such that $\overline \bq_1 \cdot \bn = 1$ on $\GammaN$
and a function $\overline\alpha \in H^1(\Omega)$ such that $\overline\alpha = \alpha$
on $\GammaN$. Let us note that usually Raviart--Thomas--N\'{e}d\'{e}lec space is considered for construction of vector fields in $\Hdiv$.
Then we can simply set $\overline \bq = -\overline\alpha u_* \overline \bq_1$.
This $\overline \bq$ obviously satisfies the boundary condition
$\overline \bq \cdot \bn = -\alpha u_*$ on $\GammaN$.
To guarantee $\overline \bq \in \Hdiv$ we need an extra smoothness of
$\overline\bq_1$ and $\overline\alpha$. Since $u_* = u_1^h$ is piecewise polynomial and thus in $W^{1,\infty}(\Omega)$, it suffices to have $\overline\bq_1 \in [H^1(\Omega)]^d$
and $\overline\alpha \in W^{1,\infty}(\Omega)$, or in the opposite way
$\overline\bq_1 \in [W^{1,\infty}(\Omega)]^d$ and $\overline\alpha \in H^1(\Omega)$.


In any case, having a $\overline \bq \in W$, we can express the affine space $W$ as
$W = \overline \bq + W_0$, where
$$
  W_0 = \{ \bq \in \Hdiv : \bq\cdot \bn = 0 \text{ on }\GammaN \}
$$
is already a linear space.
The idea is to minimize the upper bound \ttg{eq:wbound} over the set
$\overline\bq +W_0^h$, where  $W_0^h \subset W_0$ is a finite dimensional
subspace. However, the right-hand side of \ttg{eq:wbound} is a nonlinear
functional in $\bq$ and, thus, in order to simplify the computation,
we use the idea from \cite{Rep:2008} and
approximate it by a quadratic functional.
We rewrite inequality \ttg{eq:wbound} using notation~\ttg{eq:FriedAB} and Young's inequality as
\begin{equation}
  \label{eq:AsqBsq}
  \trinorm{w}^2 \leq (A + \CF B)^2
\leq (1+\varrho^{-1}) A^2 + (1+\varrho) \CF^2 B^2
\quad \forall \varrho > 0.
\end{equation}
The right-hand side of this inequality is already a quadratic functional for
a fixed $\varrho$, but the exact value of $\CF$ is unknown in general.
However, it is sufficient to find an approximate minimizer only.
Therefore we approximate $\CF$ by the available value $(\lambda_1^h)^{-1/2}$.
This leads us to the minimization of
\begin{equation}
  \label{eq:Friedquadfunc}
(1+\varrho^{-1}) \norm{\nabla u_1^h - \cA^{-1} \bq}_{\cA}^2
  + (1+\varrho) (\lambda_1^h)^{-1}
    \norm{\lambda_1^h u_1^h - c u_1^h + \ddiv \bq}_{L^2(\Omega)}^2
\end{equation}
over the affine set $\overline\bq + W_0^h$ with a fixed value of $\varrho > 0$.
This minimization problem is equivalent to seeking $\bq^h_0 \in W_0^h$
satisfying
\begin{equation}
  \label{eq:divdivprob}
  \cB(\bq_0^h,\bw_0^h) = \cF(\bw_0^h) - \cB(\overline \bq, \bw_0^h)
\quad \forall \bw_0^h \in W_0^h,
\end{equation}
where
\begin{align*}
  \cB(\bq,\bw) &= (\ddiv \bq, \ddiv \bw) + \frac{\lambda_1^h}{\varrho}
     (\cA^{-1} \bq, \bw), \\
  \cF(\bw) &= \frac{\lambda_1^h}{\varrho} (\nabla u_1^h,\bw)
    - (\lambda_1^h u_1^h - c u_1^h, \ddiv \bw).
\end{align*}
The computed vector field $\bq^h = \overline\bq + \bq_0^h \in W$ is then
used in \ttg{eq:FriedAB} to evaluate $A$ and $B$ and consequently the
two-sided bounds $\CFlow \leq \CF \leq \CFup$, where
\begin{equation}
\label{eq:Friedtwosided}
\CFlow = (\lambda_1^h)^{-1/2}
\quad\text{and}\quad
\CFup = 1/X_2,
\end{equation}
$X_2$ is given by \ttg{eq:X2} with $\gamma u_* = u_1^h$,
see \ttg{eq:abslowb} and \ttg{eq:absbound}.

Note that problem \ttg{eq:divdivprob} can be naturally approached by
standard Raviart--Thomas--N\'{e}d\'{e}lec finite elements \cite{BreFor:1991}.
Further we note that a particular value of the constant $\varrho$
can influence the accuracy of the final bound. However, this influence
was minor in all cases we numerically tested and the natural value $\varrho = 1$
yielded accurate results. If necessary a simultaneous minimization of
\ttg{eq:Friedquadfunc} with respect to both $\varrho > 0$ and
$\bq \in \overline\bq + W_0^h$ can be performed.

\subsection{Numerical experiment}
\label{se:Friedexper}

In order to illustrate the capabilities of the above described approach
for computation of two-sided bounds on Friedrichs' constant, we present
numerical results showing the dependence of Friedrichs' constant
on piecewise constant values of $\cA$.
We consider the general setting from Section~\ref{se:elloper}
and in particular we set
$\Omega = (-1,1)^2$,
$\GammaN = \{ (x_1,x_2) \in \R^2 : x_1 = 1 \text{ and } -1 < x_2 < 1 \}$,
$\GammaD = \partial\Omega \setminus \oGammaN$,
$c=0$, and $\alpha=0$.
The matrix $\cA$ is piecewise constant, defined as
$\cA(x_1,x_2) = I$ for $x_1 x_2 \leq 0$ and
$\cA(x_1,x_2) = \tilde a I$ for $x_1 x_2 > 0$,
where
$I \in\R^{d\times d}$ stands for the identity matrix and
the value of the constant $\tilde a$ is specified below.

We employ the standard piecewise linear triangular finite elements
to discretize eigenvalue problem \ttg{eq:Friedeigenp}.
Thus, we consider a conforming triangular mesh $\cT_h$ and seek the Galerkin solution of problem \ttg{eq:Friedeigenp} in space
\begin{equation}
\label{eq:FriedVh}
  V^h = \{ v^h \in H^1_\GammaD(\Omega) : v^h|_K \in P^1(K),\ \forall K \in \cT_h \},
\end{equation}
where $P^1(K)$ denotes the space of affine functions on $K$,
see Section~\ref{se:absupper}.

We point out that the discontinuity of $\cA$
causes a strong singularity of the eigenvectors at the origin.
Therefore, we employ a standard adaptive algorithm, see Algorithm~\ref{al:1},
and construct adaptively refined meshes
in order to approximate the singularity well.
We use the localized version of \ttg{eq:Friedquadfunc} to define the error
indicators
\begin{equation}
\label{eq:FriedetaK}
 \eta_K^2 = (1+\varrho^{-1}) \norm{\nabla u_1^h - \cA^{-1} \bq}_{\cA,K}^2
  + (1+\varrho) (\lambda_1^h)^{-1}
    \norm{\lambda_1^h u_1^h - c u_1^h + \ddiv \bq}_{L^2(K)}^2
\end{equation}
for all $K\in\cT_h$, where $\norm{\bw}_{\cA,K}^2 = (\cA\bw,\bw)_{L^2(K)}$.

This error indicator is a natural choice, because
we compute both $u_1^h$
and an approximation of $\bq$ on the same mesh and hence we need an indicator that
combines errors of both these solutions.
Quantities \ttg{eq:FriedetaK} do it naturally.
Indeed, if $u_1^h$ is not accurate then its residual
$w$ is large and $\eta_K$ show high values, because the sum of their squares
approximates the upper bound \ttg{eq:AsqBsq}.
Similarly, if $\bq^h$ is far from the exact minimizer $\bq$ then $\eta_K$
exhibit high values as well.
In addition, quantities $\eta_K$ are readily available, because
both norms in \ttg{eq:FriedetaK} have to be computed anyway
to get the upper bound $\CFup$.

We note that all numerical experiments in this paper use own Matlab finite element
implementation. Meshes are refined by routines of the Matlab \texttt{PDEtoolbox} and
generalized eigenvalue problems for resulting sparse
matrices are solved by the Matlab routine \texttt{eigs} that is based
on the \texttt{ARPACK} package \cite{LehSorYan:1998}.

\begin{algorithm}
\centerline{\bfseries Adaptive algorithm}
\begin{description}
\itemsep=3pt
\item[Step 1] Construct an initial mesh $\cT_h$.

\item[Step 2] Use the space \ttg{eq:FriedVh}
and compute Galerkin approximations
$\lambda_1^h \in \R$ and $u_1^h \in V^h$
of the smallest eigenvalue and the corresponding eigenvector of problem
\ttg{eq:Friedeigenp}, see \ttg{eq:EP1h}.

\item[Step 3] Use the finite element space
$
  W^h_0 = \{ \bw_h \in W_0 : \bw_h \in [P^2(K)]^2,\ \forall K \in \cT_h \}
$
and solve \ttg{eq:divdivprob}.

\item[Step 4] \label{F_2sided_bound_FC} Evaluate two-sided bounds \ttg{eq:Friedtwosided}.
Set $\CFavg = (\CFup + \CFlow)/2$, $\RelErr = (\CFup - \CFlow) / \CFavg$,
and stop the algorithm as soon as $\RelErr \leq \ErrTol$.

\item[Step 5] Compute error indicators \ttg{eq:FriedetaK} for all $K\in\cT_h$
and sort them in descending order:
$\eta_{K_1} \geq \eta_{K_2} \geq \dots \geq \eta_{K_{\Nel}}$,
where $\Nel$ is the number of elements in $\cT_h$.

\item[Step 6] (Bulk criterion.) Find the smallest $n$ such that
$
  \theta^2 \sum_{i=1}^{\Nel} \eta_{K_i}^2 \leq \sum_{i=1}^{n} \eta_{K_i}^2,
$
where $\theta\in(0,1)$ is a parameter.

\item[Step 7] Construct a new mesh $\cT_h$ by refining elements $K_1$, $K_2$, \dots, $K_n$.

\item[Step 8] Go to Step 2.
\end{description}
\medskip

\caption{\label{al:1} Mesh adaptation algorithm for two-sided bounds on Friedrichs' constant.
}
\end{algorithm}

The unknown value of Friedrichs' constant lies between bounds
$\CFlow$ and $\CFup$ computed in Step~4 of Algorithm~\ref{al:1}.
When Algorithm~\ref{al:1} stops, the relative error is guaranteed to be at most the given tolerance $\ErrTol$.

In this particular numerical experiment we have chosen
the initial mesh with eight triangles as shown in Figure~\ref{fi:initmesh}.
The marking parameter in Step 6 and
the tolerance for the relative error in Step 4 were chosen
as $\theta=0.75$ and $\ErrTol = 0.01$, respectively.
Further, we naturally set $\varrho=1$.
We made an attempt to find an optimal value for $\varrho$.
However, this decreases the relative error by a factor of magnitude
$10^{-4}$--$10^{-3}$ on a fixed mesh, which is below the target accuracy
$\ErrTol = 0.01$. Therefore, we use the natural value $\varrho=1$ that equilibrates
both terms in \ttg{eq:FriedetaK}. A similar statement can be made in 
all subsequent numerical experiments. On the other hand, certain
examples might behave differently and optimization of $\varrho$ could
be important.

\begin{figure}
\begin{center}
\vspace{6pt}
\makebox[0pt][l]{\hspace{-24pt}\raisebox{-12pt}[0pt][0pt]{$(-1,-1)$}}%
\makebox[0pt][l]{\hspace{35mm}\raisebox{-12pt}[0pt][0pt]{$(1,-1)$}}%
\makebox[0pt][l]{\hspace{36mm}\raisebox{42mm}[0pt][0pt]{$(1,1)$}}%
\makebox[0pt][l]{\hspace{-24pt}\raisebox{42mm}[0pt][0pt]{$(-1,1)$}}%
\makebox[0pt][l]{\hspace{-16pt}\raisebox{19mm}[0pt][0pt]{$\GammaD$}}%
\makebox[0pt][l]{\hspace{41.5mm}\raisebox{19mm}[0pt][0pt]{$\GammaN$}}%
\makebox[0pt][l]{\hspace{19mm}\raisebox{42mm}[0pt][0pt]{$\GammaD$}}%
\makebox[0pt][l]{\hspace{19mm}\raisebox{-12pt}[0pt][0pt]{$\GammaD$}}%
\includegraphics{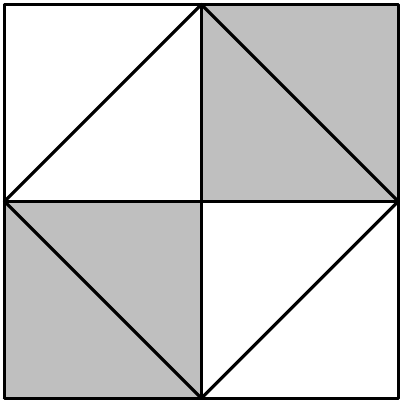}
\\[12pt]
\caption{
The initial mesh for the adaptive algorithm.
The Dirichlet and Neumann parts of the boundary are indicated as well as
the piecewise constant matrix $\cA$:
$\cA=I$ in white elements and $\cA=\tilde a I$ in gray elements.}
\label{fi:initmesh}
\end{center}
\end{figure}

The results for a series of values of $\tilde a$ 
are summarized in Table~\ref{ta:Friedres}.
For each particular value of $\tilde a$, we run the adaptive algorithm
until the relative error drops below $\ErrTol = 0.01$.
This error level was reached in all cases using several thousands
of degrees of freedom.
Notice the considerable dependence of the optimal value of
Friedrichs' constant on $\tilde a$.
The values for $\tilde a=0.001$ and $\tilde a=1000$ differ
more than thirty times.
Further notice that the exact value of Friedrichs' constant
for $\tilde a=1$ is $\CF=4/(\pi\sqrt{5})\approx 0.5694$. 

In order to illustrate the behavior of the adaptive process, we consider
the case $\tilde a=0.001$. The convergence of the bounds $\CFup$ and $\CFlow$
is presented in Figure~\ref{fi:resCF} (left).
The right panel of Figure~\ref{fi:resCF} shows the convergence
of the relative error $\RelErr$ together with the heuristic indicators
$D_1$ and $D_2$ introduced at the end of Section~\ref{se:lbound}.
The fact that $D_1$ is several times smaller than $D_2$ at the later stages
of the adaptive process indicates good confidence
in the validity of assumption \ttg{eq:closest}.
In addition, two adapted meshes are drawn in
Figures~\ref{fi:meshesA}--\ref{fi:meshesB} (left).

\begin{table}
\center{
\begin{tabular}{r|cccr}
$\tilde a$ & $\CFlow$ & $\CFup$ & $\RelErr$ & $\NDOF$
\\ \hline
$0.001$ & 9.0086 & 9.0939 & 0.0094 & 4\,832 \\
$0.01$  & 2.8697 & 2.8971 & 0.0095 & 5\,003 \\
$0.1$   & 1.0035 & 1.0124 & 0.0088 & 7\,866 \\
$1$     & 0.5693 & 0.5743 & 0.0086 & 4\,802 \\
$10$    & 0.3173 & 0.3201 & 0.0088 & 7\,866 \\
$100$   & 0.2870 & 0.2897 & 0.0095 & 5\,003 \\
$1000$  & 0.2849 & 0.2876 & 0.0094 & 4\,832 \\
\end{tabular}
}
\caption{
Friedrichs' constant.
The lower bound $\CFlow$, upper bound $\CFup$, relative error $\RelErr$,
and the number of degrees of freedom $\NDOF=\operatorname{dim}V^h$ for
particular values of $\tilde a$.}
\label{ta:Friedres}
\end{table}

\begin{figure}
\begin{center}
\includegraphics[height=50mm]{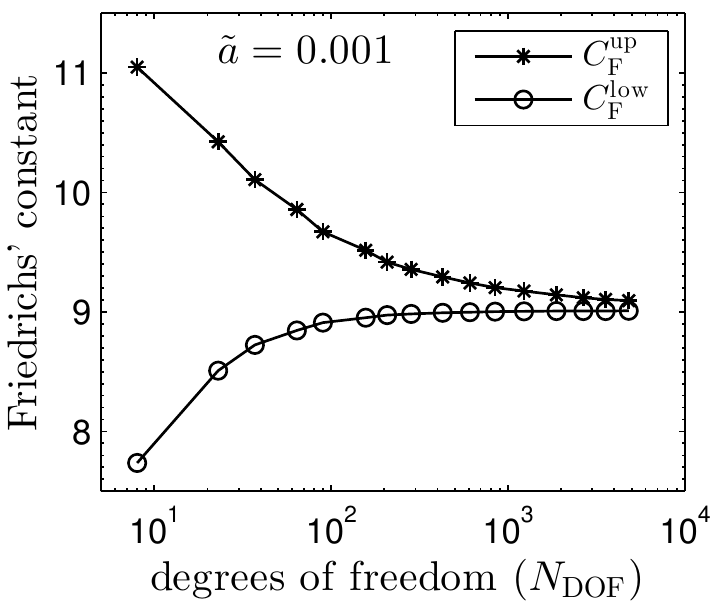}\quad
\includegraphics[height=50mm]{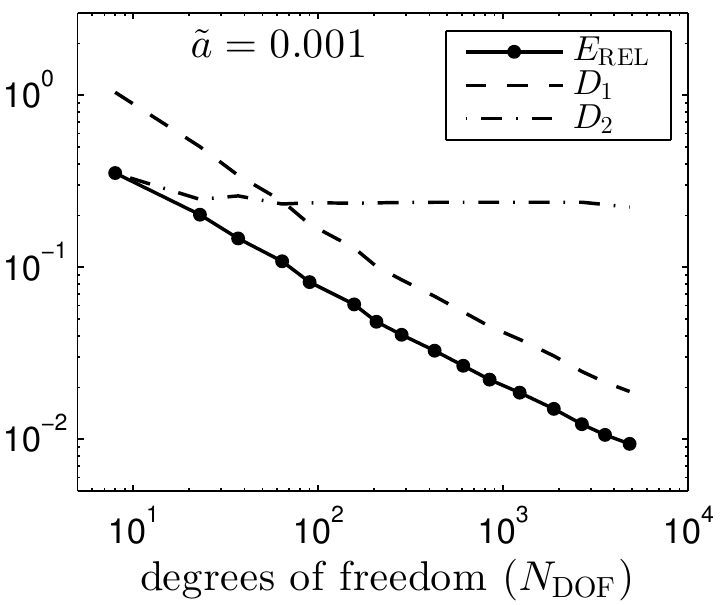}
\caption{
Friedrichs' constant for $\tilde a = 0.001$.
Convergence of bounds $\CFup$ and $\CFlow$ (left)
and of the relative error $\RelErr$ and heuristic indicators $D_1$ and $D_2$ (right).
}
\label{fi:resCF}
\end{center}
\end{figure}

\section{Application to the Poincar\'e inequality}

\subsection{Poincar\'e inequality and the optimal constant}
In this section we consider the case when none of conditions (a)--(c) of
Lemma~\ref{le:scalprod} is satisfied. Therefore, we assume
the general symmetric
second-order elliptic operator as described in Section~\ref{se:elloper}
with $c=0$, $\alpha=0$, $\GammaD=\emptyset$, and
$\GammaN=\partial\Omega$.
We apply the general theory of Section~\ref{se:absineq} with
$V = \oHI = \{ v \in H^1(\Omega) : (v,1) = 0 \}$,
$(u,v)_V = a(u,v) = (\cA \nabla u, \nabla v)$,
$H = L^2(\Omega)$, and
$(u,v)_H = (u,v)$.
It is an easy exercise to verify that $a(\cdot,\cdot)$ forms a scalar product
on $\oHI$ and that it induces a norm $\trinorm{\cdot}$
equivalent to the standard $H^1$-seminorm and $H^1$-norm.
The operator $\gamma: V \rightarrow H$ is set to be the identity mapping
$I : \oHI\rightarrow L^2(\Omega)$.
This mapping is clearly compact, because the identity mapping
from $\oHI$ to $H^1(\Omega)$ is linear and continuous
and the identity mapping from $H^1(\Omega)$ to $L^2(\Omega)$ is compact
due to the  Rellich theorem \cite[Theorem~6.3]{Adams_Sob_spaces_03}.
This setting enables to use the general conclusions of Theorem~\ref{th:absineq}
and obtain the following result.
\begin{theorem}
\label{th:Poin}
There exists a constant $\CP > 0$ such that
\begin{equation}
\label{eq:Poincare}
  \norm{v}_{L^2(\Omega)} \leq \CP \trinorm{v}
     \quad \forall v \in \oHI.
\end{equation}
Moreover, the optimal value of this constant is $\CP = \lambda_1^{-1/2}$, where
$\lambda_1$ is the smallest \emph{positive} eigenvalue of the following problem:
find $u_i \in H^1(\Omega)$, $u_i \neq 0$, and $\lambda_i \in\R$
such that
\begin{equation}
\label{eq:Poineigenp}
  a(u_i,v) = \lambda_i ( u_i, v ) \quad \forall v \in H^1(\Omega).
\end{equation}
\end{theorem}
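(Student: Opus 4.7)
The plan is to apply Theorem~\ref{th:absineq} with the specified setting, but the main subtlety is that the eigenvalue problem in Theorem~\ref{th:Poin} is posed on all of $H^1(\Omega)$ rather than on the subspace $\oHI$ where the abstract theory applies. So the core task is to verify that the positive spectrum of \ttg{eq:Poineigenp} agrees with the spectrum of the restricted eigenproblem obtained by replacing $H^1(\Omega)$ by $\oHI$ on both sides of the variational identity.

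First, I would confirm the hypotheses of Theorem~\ref{th:absineq} for the chosen setting: $\oHI$ is a closed subspace of $H^1(\Omega)$ (being the kernel of the continuous linear functional $v \mapsto (v,1)$), $a(\cdot,\cdot)$ is a scalar product on $\oHI$ with induced norm $\trinorm{\cdot}$ equivalent to $\|\cdot\|_{H^1(\Omega)}$ on $\oHI$ (by the standard Poincaré–Wirtinger inequality together with the uniform ellipticity of $\cA$), and the inclusion $\oHI \hookrightarrow L^2(\Omega)$ is compact by the Rellich theorem. Theorem~\ref{th:absineq} then immediately yields \ttg{eq:Poincare} with optimal constant $\CP = \tilde\lambda_1^{-1/2}$, where $\tilde\lambda_1$ is the smallest eigenvalue of the abstract eigenproblem
$$
 a(u_i, v) = \tilde\lambda_i\, (u_i, v) \quad \forall v \in \oHI.
$$

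Next, I would identify $\tilde\lambda_1$ with the smallest \emph{positive} eigenvalue of~\ttg{eq:Poineigenp}. The key observation is that $a(1,v) = 0$ for all $v \in H^1(\Omega)$, so $\lambda = 0$ is an eigenvalue of~\ttg{eq:Poineigenp} whose eigenspace consists precisely of the constants (any zero eigenvector $u$ satisfies $\trinorm{u}^2 = a(u,u) = 0$, hence is constant). For any eigenvector $u_i$ of~\ttg{eq:Poineigenp} with positive eigenvalue $\lambda_i$, testing with $v = 1$ gives $0 = \lambda_i (u_i,1)$, so $u_i \in \oHI$; restricting the variational identity to test functions in $\oHI \subset H^1(\Omega)$ shows that $(u_i,\lambda_i)$ is also an eigenpair of the restricted problem. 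Conversely, if $(u_i,\tilde\lambda_i)$ solves the restricted eigenproblem in $\oHI$, then for arbitrary $v \in H^1(\Omega)$ I decompose $v = v_0 + c$ with $c = (v,1)/|\Omega|$ and $v_0 = v - c \in \oHI$; since $a(u_i,c) = 0$ and $(u_i,c) = 0$, the identity $a(u_i,v_0) = \tilde\lambda_i (u_i,v_0)$ extends to all of $H^1(\Omega)$, showing $(u_i,\tilde\lambda_i)$ solves~\ttg{eq:Poineigenp}.

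This bijection between the spectra establishes $\tilde\lambda_1 = \lambda_1$, the smallest positive eigenvalue of~\ttg{eq:Poineigenp}, and the formula $\CP = \lambda_1^{-1/2}$ follows. The only real obstacle is the bookkeeping separating the one-dimensional kernel of $a(\cdot,\cdot)$ on $H^1(\Omega)$ from its $L^2$-orthogonal complement $\oHI$; once this decomposition is handled carefully, the result reduces to a direct invocation of Theorem~\ref{th:absineq}.
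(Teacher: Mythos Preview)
Your proposal is correct and follows essentially the same route as the paper: apply Theorem~\ref{th:absineq} in the setting $V=\oHI$, $H=L^2(\Omega)$, $\gamma=I$ to obtain \ttg{eq:Poincare} with $\CP=\tilde\lambda_1^{-1/2}$ for the restricted eigenproblem, and then identify $\tilde\lambda_1$ with the smallest positive eigenvalue of \ttg{eq:Poineigenp}. In fact, the paper merely asserts that ``it can be easily shown that $\tilde\lambda_i=\lambda_i$ and $\tilde u_i=u_i$,'' whereas you actually supply the bijection argument (testing with $v=1$ in one direction and decomposing $v=v_0+c$ in the other), so your write-up is more complete than the paper's own proof.
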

\begin{proof}
The inequality \ttg{eq:Poincare} follows immediately from
Theorem~\ref{th:absineq}.
This theorem also implies that the optimal constant is $\CP = \tilde\lambda_1^{-1/2}$,
where $\tilde\lambda_1$ is the smallest eigenvalue of the following problem:
find $\tilde u_i \in \oHI$, $\tilde u_i \neq 0$, and $\tilde\lambda_i \in\R$
such that
\begin{equation}
\label{eq:Poineigenp2}
  a(\tilde u_i, v) = \tilde\lambda_i (\tilde u_i, v)
  \quad \forall v \in \oHI.
\end{equation}
Notice that $0 < \tilde\lambda_1 \leq \tilde\lambda_2 \leq \dots$.
Similarly, the eigenvalues of \ttg{eq:Poineigenp} satisfy
$0 = \lambda_0 < \lambda_1 \leq \lambda_2 \leq \dots$
and the zero eigenvalue corresponds to a constant eigenvector $u_0=1$.
It can be easily shown that $\tilde\lambda_i=\lambda_i$
and $\tilde u_i = u_i$ for all $i=1,2,\dots$.
Thus,
the smallest eigenvalue $\tilde\lambda_1$ of \ttg{eq:Poineigenp2}
is equal to the smallest \emph{positive} eigenvalue $\lambda_1$ of \ttg{eq:Poineigenp}
and the proof is finished.
\end{proof}

Let us note that the Poincar\'e inequality \ttg{eq:Poincare} can be equivalently
formulated as
\begin{equation}
\label{eq:Poincare2}
\norm{v - \tilde v}_{L^2(\Omega)} \leq \CP \trinorm{v}
     \quad \forall v \in H^1(\Omega),
\quad \tilde v = (v,1) / |\Omega|.
\end{equation}
The common version of the Poincar\'e inequality
$$
  \norm{v - \tilde v}_{L^2(\Omega)} \leq \CP \norm{\nabla v}_{L^2(\Omega)}
  \quad \forall v \in H^1(\Omega),\quad \tilde v = (v,1) / |\Omega|
$$
then follows from \ttg{eq:Poincare2}, or equivalently from~\ttg{eq:Poincare},
with $\cA$ being the identity matrix.

\subsection{Two-sided bounds on the Poincar\'e constant}

A lower bound on the Poincar\'e constant $\CP$ can be computed in
the standard way by the Galerkin method, see Section~\ref{se:absupper}.
The only difference is that here we compute the approximation $\lambda_1^h$ of
the second smallest (the smallest positive) eigenvalue of \ttg{eq:Poineigenp},
because the smallest eigenvalue is $\lambda_0 = 0$.
In order to compute the upper bound, we employ the complementarity technique
as follows.

\begin{theorem}\label{th:Poincomp}
Let $V = \oHI$, $u_* \in V$, and $\lambda_* \in \R$.
Let $w\in V$ satisfy
\begin{equation}
\label{eq:Poindefw}
  a(w,v) = a(u_*,v) - \lambda_*(u_*,v) \quad \forall v \in V.
\end{equation}
Then
\begin{equation}
  \label{eq:Poinwbound}
  \trinorm{w} \leq \norm{\nabla u_* - \cA^{-1} \bq}_{\cA}
      + \CP \norm{\lambda_* u_* + \ddiv \bq}_{L^2(\Omega)}
\quad \forall \bq \in W_0,
\end{equation}
where $W_0 = \{ \bq \in \Hdiv : \bq\cdot \bn = 0 \text{ on } \partial\Omega \}$.
\end{theorem}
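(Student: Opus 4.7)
The plan is to mimic the argument used for the Friedrichs case in Theorem~\ref{th:comp}, adapting it to the specific features of the Poincaré setting, namely $c = 0$, $\alpha = 0$, the absence of a Dirichlet part of the boundary, and the mean-zero constraint built into $V = \oHI$. I fix an arbitrary $\bq \in W_0$ and test the defining identity~\ttg{eq:Poindefw} with $v = w$, obtaining $\trinorm{w}^2 = a(u_*,w) - \lambda_*(u_*,w) = (\cA\nabla u_*,\nabla w) - \lambda_*(u_*,w)$.

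Next I insert the identity $0 = -(\bq,\nabla w) - (\ddiv\bq, w) + (\bq\cdot\bn, w)_{L^2(\partial\Omega)}$, which is just the divergence theorem applied to the $\Hdiv$ field $\bq$ against $w \in H^1(\Omega)$. The key point is that the boundary integral vanishes because $\bq \cdot \bn = 0$ on $\partial\Omega$ by the definition of $W_0$. Regrouping gives the clean identity
\begin{equation*}
\trinorm{w}^2 = \bigl(\cA(\nabla u_* - \cA^{-1}\bq),\nabla w\bigr) - (\lambda_* u_* + \ddiv\bq, w).
\end{equation*}

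From here the Cauchy--Schwarz inequality bounds the first term by $\|\nabla u_* - \cA^{-1}\bq\|_\cA \|\nabla w\|_\cA$ and the second by $\|\lambda_* u_* + \ddiv\bq\|_{L^2(\Omega)} \|w\|_{L^2(\Omega)}$. Since in this section $a(u,v) = (\cA\nabla u,\nabla v)$, we have $\|\nabla w\|_\cA = \trinorm{w}$. To control $\|w\|_{L^2(\Omega)}$ I invoke the Poincaré inequality from Theorem~\ref{th:Poin}, which applies because $w \in V = \oHI$ is mean-free; this gives $\|w\|_{L^2(\Omega)} \leq \CP\trinorm{w}$. Collecting and dividing by $\trinorm{w}$ (the conclusion is trivial if $w = 0$) yields the desired bound~\ttg{eq:Poinwbound}.

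The only genuinely delicate point, and really the sole place where this differs from the Friedrichs proof, is making sure the boundary term from integration by parts truly disappears: this requires the normal trace of $\bq$ to be well defined in $H^{-1/2}(\partial\Omega)$ and to vanish, which is exactly encoded in $\bq \in W_0$, and it requires that $w$ has a well-defined trace in $H^{1/2}(\partial\Omega)$, which holds since $w \in H^1(\Omega)$. Once that observation is in place, the rest is bookkeeping identical to the Friedrichs argument.
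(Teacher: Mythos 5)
Your proof is correct and follows exactly the paper's argument: test \ttg{eq:Poindefw} with $v=w$, insert the divergence-theorem identity for $\bq\in W_0$ (whose boundary term vanishes since $\bq\cdot\bn=0$ on $\partial\Omega$), regroup, and conclude via Cauchy--Schwarz, the equality $\norm{\nabla w}_\cA=\trinorm{w}$, and the Poincar\'e inequality applied to $w\in\oHI$. The extra remarks on the normal trace in $H^{-1/2}(\partial\Omega)$ are a welcome clarification but do not change the route.
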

\begin{proof}
By fixing arbitrary $\bq \in W_0$, testing \ttg{eq:Poindefw} by $v=w$ and
using the divergence theorem, we obtain
\begin{multline*}
  \trinorm{w}^2 = (\cA \nabla u_*,\nabla w) - \lambda_* (u_*,w)
    - (\bq,\nabla w) - (\ddiv\bq, w)
\\
  = \left(\cA (\nabla u_* - \cA^{-1} \bq),\nabla w\right)
  - (\lambda_* u_* + \ddiv\bq,w).
\end{multline*}
The Cauchy--Schwarz inequality and Poincar\'e inequality \ttg{eq:Poincare}
yield
$$
\trinorm{w}^2 \leq \norm{\nabla u_* - \cA^{-1} \bq}_\cA \norm{\nabla w}_\cA
  + \CP \norm{\lambda_* u_* + \ddiv\bq}_{L^2(\Omega)} \trinorm{w}.
$$
Since $\norm{\nabla w}_\cA = \trinorm{w}$, the proof is finished.
\end{proof}

We observe that complementarity estimate \ttg{eq:Poinwbound} is of type
\ttg{eq:abscompl} with
\begin{equation}
\label{eq:PoinAB}
A = \norm{\nabla u_* - \cA^{-1} \bq}_{\cA}, \quad
B = \norm{\lambda_* u_* + \ddiv \bq}_{L^2(\Omega)},
\end{equation}
and $C_\gamma = \CP$.
As soon as suitable approximations $\lambda_*$, $u_*$, and a suitable vector field $\bq \in W_0$ are available,
the numbers $A$ and $B$ can be computed and used in
\ttg{eq:absbound}--\ttg{eq:X2}
to obtain a guaranteed upper bound on the Poincar\'e constant $\CP$.

A straightforward approach for computation of $\lambda_*$, $u_*$, and $\bq$
was described in Section~\ref{se:Friedflux} for the case of Friedrichs' constant.
It can be directly used also for the Poincar\'e constant. It is even simpler,
because $c=0$, $\alpha=0$, and $\GammaN = \partial\Omega$.
The only difference is that approximations $\lambda_1^h$ and $u_1^h$ of
the second smallest (the first positive) eigenvalue of \ttg{eq:Poineigenp} and its corresponding eigenvector, respectively, have to be used.
In particular, the approximation $\bq^h \in W_0$ is computed using
 \ttg{eq:divdivprob}.
This vector field is then used in \ttg{eq:PoinAB} to evaluate $A$ and $B$
and consequently the two-sided bounds
$\CPlow \leq \CP \leq \CPup$, where
\begin{equation}
\label{eq:Pointwosided}
  \CPlow = (\lambda_1^h)^{-1/2}
\quad\text{and}\quad
  \CPup = 1/X_2,
\end{equation}
$X_2$ is given by \ttg{eq:X2} with $\gamma u_* = u_1^h$,
see also \ttg{eq:abslowb} and \ttg{eq:absbound}.

\subsection{Numerical experiment}

We consider the same setting as in Section~\ref{se:Friedexper}.
The only difference is that in the case of Poincar\'e constant we assume
$\GammaN=\partial\Omega$ and $\GammaD=\emptyset$.
We employ the adaptive algorithm as before (Algorithm~\ref{al:1})
with clear modifications. We use error indicators \ttg{eq:FriedetaK},
where $\lambda_1^h$ and $u_1^h$  are
the Galerkin approximations of the second smallest (the smallest positive)
eigenvalue $\lambda_1$ of \ttg{eq:Poineigenp} and its corresponding
eigenvector $u_1$, respectively. The relative error in Step~4 of Algorithm~\ref{al:1}
is computed using two-sided bounds \ttg{eq:Pointwosided}.

The obtained two-sided bounds on the Poincar\'e constant $\CP$ for a series of values of $\tilde a$ are
presented in Table~\ref{ta:Poinres}.
The guaranteed 0.01 relative error tolerance was reached in all cases
using several thousands degrees of freedom.
As in the case of Friedrichs' constant,
we observe considerable dependence of the Poincar\'e constant $\CP$ on $\tilde a$.
Finally, we point out that the exact value of the Poincar\'e constant
for $\tilde a=1$ is $\CP=2/\pi \approx 0.6366$. 

The progress of the adaptive algorithm is illustrated for the case
$\tilde a = 0.001$ in Figure~\ref{fi:resCP} (left), where
the convergence of bounds $\CPup$ and $\CPlow$ is shown.
Figure~\ref{fi:resCP} (right) presents the relative error $\RelErr$ and heuristic indicators
$D_1$, $D_2$, see the end of Section~\ref{se:lbound}.
In later stages of the adaptive process the indicator $D_1$ is considerably
smaller than $D_2$ which indicates good confidence
in the validity of assumption \ttg{eq:closest}.
Two adapted meshes are drawn in Figures~\ref{fi:meshesA}--\ref{fi:meshesB} (middle).

\begin{table}
\center{
\begin{tabular}{r|cccr}
$\tilde a$ & $\CPlow$ & $\CPup$ & $\RelErr$ & $\NDOF$
\\ \hline
$0.001$ & 14.2390 & 14.3690 & 0.0091 & 3\,400 \\
$0.01$  &  4.5199 &  4.5623 & 0.0093 & 3\,510 \\
$0.1$   &  1.4849 &  1.4989 & 0.0094 & 4\,382 \\
$1$     &  0.6365 &  0.6424 & 0.0092 & 3\,009 \\
$10$    &  0.4696 &  0.4740 & 0.0094 & 4\,382 \\
$100$   &  0.4520 &  0.4562 & 0.0093 & 3\,510 \\
$1000$  &  0.4503 &  0.4544 & 0.0091 & 3\,400 \\
\end{tabular}
}
\caption{
Poincar\'e constant.
The lower bound $\CPlow$, upper bound $\CPup$, relative error $\RelErr$,
and the number of degrees of freedom $\NDOF=\operatorname{dim}V^h$ for
particular values of $\tilde a$.}
\label{ta:Poinres}
\end{table}

\begin{figure}
\begin{center}
\includegraphics[height=50mm]{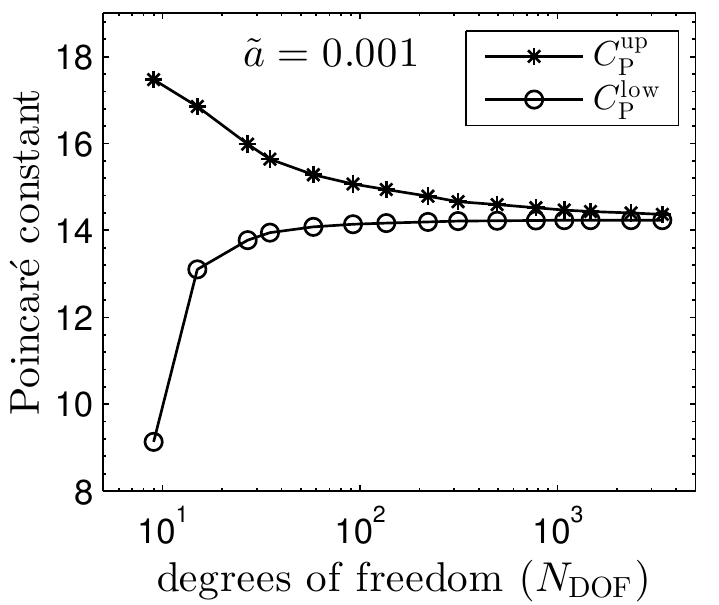}\quad
\includegraphics[height=50mm]{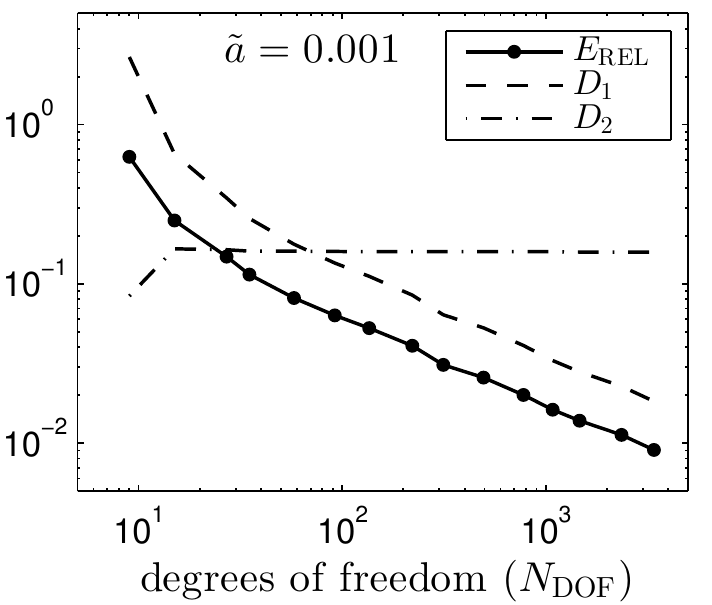}
\caption{
Poincar\'e constant for $\tilde a = 0.001$.
Convergence of bounds $\CPup$ and $\CPlow$ (left)
and of the relative error $\RelErr$ and heuristic indicators $D_1$ and $D_2$ (right).
}
\label{fi:resCP}
\end{center}
\end{figure}

\section{Application to the trace inequality}
\label{se:trace}

\subsection{Trace inequality and the optimal constant}
In order to apply the general theory from
Sections~\ref{se:abseigenprob}--\ref{se:absineq}
to the case of the trace inequality,
we consider the same general symmetric
second-order elliptic operator as in Section~\ref{se:elloper}.
In addition we assume $\operatorname{meas}_{d-1} \GammaN > 0$ and that at least one of conditions (a)--(c) of
Lemma~\ref{le:scalprod} is satisfied.
The general theory is applied with
$V = H^1_\GammaD(\Omega)$, $(u,v)_V = a(u,v)$,
$H = L^2(\GammaN)$, and $(u,v)_H = (u,v)_{L^2(\GammaN)}$.
The operator $\gamma: V \rightarrow H$ is the standard trace operator.
Its compactness and other properties are provided in \cite[Theorem 6.10.5]{Kuf_John_Fucik_Function_spaces}, see also \cite{Biegert:2009}.
The general result from Theorem~\ref{th:absineq} then translates as follows.
\begin{theorem}
\label{th:trace}
Let the bilinear form $a(\cdot,\cdot)$ given by \ttg{eq:blf} form a scalar product in $H^1_\GammaD(\Omega)$.
Then there exists a constant $\CT > 0$ such that
\begin{equation}
\label{eq:trace}
  \norm{v}_{L^2(\GammaN)} \leq \CT \trinorm{v} \quad \forall v \in H^1_\GammaD(\Omega).
\end{equation}
Moreover, the optimal value of this constant is $\CT = \lambda_1^{-1/2}$, where
$\lambda_1$ is the smallest eigenvalue of the following problem:
find $u_i \in H^1_\GammaD(\Omega)$, $u_i \neq 0$, and $\lambda_i \in\R$
such that
\begin{equation}
\label{eq:traceeigenp}
  a(u_i,v) = \lambda_i ( u_i, v )_{L^2(\GammaN)} \quad \forall v \in H^1_\GammaD(\Omega).
\end{equation}
\end{theorem}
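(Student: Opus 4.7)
The plan is to reduce the assertion directly to the abstract inequality established in Theorem~\ref{th:absineq}. Once the Hilbert space framework is set up correctly and the trace operator is verified to satisfy the three hypotheses (linear, continuous, compact), everything follows automatically and the optimal constant identification $\CT = \lambda_1^{-1/2}$ is immediate from the abstract theorem.

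First, I would verify that the setting $V = H^1_\GammaD(\Omega)$ with scalar product $(u,v)_V = a(u,v)$ defines a Hilbert space. Since we assume at least one of the conditions (a)--(c) of Lemma~\ref{le:scalprod} holds, the bilinear form $a(\cdot,\cdot)$ from \ttg{eq:blf} is indeed a scalar product on $H^1_\GammaD(\Omega)$, and the induced norm $\trinorm{\cdot}$ is equivalent to the standard $H^1$-norm. Completeness of $(H^1_\GammaD(\Omega), \trinorm{\cdot})$ follows from this equivalence together with the completeness of $H^1_\GammaD(\Omega)$ in the $H^1$-norm, exactly as in the proof of Theorem~\ref{th:Fried}. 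Next, $H = L^2(\GammaN)$ with its canonical scalar product is a Hilbert space; the assumption $\operatorname{meas}_{d-1}\GammaN > 0$ ensures this space is meaningful.

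Second, I would verify the hypotheses on $\gamma$. The standard trace operator $\gamma : H^1_\GammaD(\Omega) \to L^2(\GammaN)$ is linear by construction and continuous by the classical trace theorem (continuity from $H^1(\Omega)$ to $L^2(\partial\Omega)$, restricted to the subspace and to $\GammaN$). Crucially, the trace operator is \emph{compact} from $H^1(\Omega)$ into $L^2(\partial\Omega)$; this is the content of \cite[Theorem 6.10.5]{Kuf_John_Fucik_Function_spaces} (see also \cite{Biegert:2009}), and compactness is preserved upon restricting the codomain to $L^2(\GammaN)$ and the domain to the closed subspace $H^1_\GammaD(\Omega)$.

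Having verified all the assumptions of the abstract framework, I would apply Theorem~\ref{th:absineq} with $C_\gamma = \CT$. The abstract inequality \ttg{eq:absineq} reads precisely as \ttg{eq:trace}, and the characterization $\CT = \lambda_1^{-1/2}$ with $\lambda_1$ the smallest eigenvalue of the abstract eigenproblem \ttg{eq:EP1} follows immediately. Rewriting \ttg{eq:EP1} with the specific choices $(u,v)_V = a(u,v)$ and $(\gamma u, \gamma v)_H = (u,v)_{L^2(\GammaN)}$ yields \ttg{eq:traceeigenp}, completing the argument. I do not anticipate any genuine obstacle here: the only slightly nontrivial ingredient is the compactness of the trace operator, but this is available as a cited classical result, so the proof is essentially a verification that the hypotheses of Theorem~\ref{th:absineq} are met in this setting.
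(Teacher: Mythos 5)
Your proposal is correct and follows essentially the same route as the paper: the paper's proof likewise reduces the statement to Lemma~\ref{le:scalprod} (to get the Hilbert space structure on $H^1_\GammaD(\Omega)$ under $a(\cdot,\cdot)$) and Theorem~\ref{th:absineq}, with the compactness of the trace operator supplied by the same cited reference. Your write-up merely spells out the verification of the hypotheses in more detail than the paper does.
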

\begin{proof}
The statement follows immediately from Lemma~\ref{le:scalprod} and
Theorem~\ref{th:absineq}.
\end{proof}
The common version of the trace inequality
$$
  \norm{v}_{L^2(\partial\Omega)} \leq \CT \norm{v}_{H^1(\Omega)}
  \quad \forall v \in H^1(\Omega)
$$
then follows from \ttg{eq:trace} with $\GammaD = \emptyset$,
$\GammaN = \partial\Omega$, $\cA$ being the identity matrix, $c = 1$,
and $\alpha = 0$.

\subsection{Two-sided bounds on the trace constant}

As in the case of Friedrichs' inequality, we compute the lower bound
of the optimal value for the constant $\CT$ by the Galerkin method,
see Section~\ref{se:absupper}.
In order to compute an upper bound on $\CT$ we employ the complementarity
technique as follows.

\begin{theorem}\label{th:tracecomp}
Let $V = H^1_\GammaD(\Omega)$, $u_* \in V$, and $\lambda_* \in \R$.
Let the bilinear form $a(\cdot,\cdot)$ given by \ttg{eq:blf} form a scalar product in $V$.
Let $w\in V$ satisfy
\begin{equation}
\label{eq:tracedefw}
  a(w,v) = a(u_*,v) - \lambda_*(u_*,v)_{L^2(\GammaN)} \quad \forall v \in V.
\end{equation}
Then, for any $\bq \in \Hdiv$
\begin{equation}
  \label{eq:tracewbound}
  \trinorm{w} \leq
            \norm{\nabla u_* - \cA^{-1} \bq}_{\cA}
      + \CF \norm{c u_* - \ddiv \bq}_{L^2(\Omega)}
      + \CT \norm{\alpha u_* - \lambda_* u_* + \bq \cdot \bn}_{L^2(\GammaN)}.
\end{equation}
\end{theorem}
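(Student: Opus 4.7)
The plan is to mirror the template of the Friedrichs and Poincar\'e complementarity estimates (Theorems~\ref{th:comp} and~\ref{th:Poincomp}), with one key modification dictated by the problem geometry. Because $\bq$ is now allowed to range over the whole of $\Hdiv$ rather than being constrained to satisfy $\bq\cdot\bn=-\alpha u_*$ on $\GammaN$, the boundary integral generated by the divergence theorem will no longer cancel; instead it has to be controlled via the trace inequality~\ttg{eq:trace}. This is precisely what introduces the third summand in~\ttg{eq:tracewbound}.

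First I would test~\ttg{eq:tracedefw} with $v=w$ to obtain
$$\trinorm{w}^2 = (\cA\nabla u_*,\nabla w) + (cu_*,w) + (\alpha u_* - \lambda_* u_*, w)_{L^2(\GammaN)}.$$
Next, for any fixed $\bq\in\Hdiv$, the divergence theorem together with the fact that $w=0$ on $\GammaD$ yields the auxiliary identity
$$0 = -(\bq,\nabla w) - (\ddiv\bq,w) + (\bq\cdot\bn, w)_{L^2(\GammaN)}.$$
Adding these two lines and regrouping the contributions according to their natural inner products gives
$$\trinorm{w}^2 = \bigl(\cA(\nabla u_* - \cA^{-1}\bq),\nabla w\bigr) + (cu_* - \ddiv\bq, w) + (\alpha u_* - \lambda_* u_* + \bq\cdot\bn, w)_{L^2(\GammaN)}.$$

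After this rewriting, I would apply the Cauchy--Schwarz inequality to each of the three summands separately. The volume gradient term is bounded by $\norm{\nabla u_* - \cA^{-1}\bq}_\cA\,\norm{\nabla w}_\cA$, and since $\norm{\nabla w}_\cA\leq\trinorm{w}$ (because the $c$- and $\alpha$-contributions to $\trinorm{w}^2$ are nonnegative), the first summand of~\ttg{eq:tracewbound} appears. For the $L^2(\Omega)$ residual I would invoke Friedrichs' inequality~\ttg{eq:Fried} to replace $\norm{w}_{L^2(\Omega)}$ by $\CF\trinorm{w}$, producing the second summand. For the boundary residual I would use the trace inequality~\ttg{eq:trace} in exactly the same way, yielding $\norm{w}_{L^2(\GammaN)}\leq\CT\trinorm{w}$ and thus the third summand. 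Dividing through by $\trinorm{w}$ delivers the claimed bound.

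The main (modest) subtlety is purely a matter of bookkeeping: one must keep the $\alpha u_*$ and $-\lambda_* u_*$ contributions together with the surviving $\bq\cdot\bn$ contribution inside a single $L^2(\GammaN)$-norm so that the trace inequality can be invoked exactly once. Note also that, unlike in Theorem~\ref{th:comp}, the estimate involves \emph{both} $\CF$ and $\CT$; when plugging into the abstract Theorem~\ref{le:abscompl} with $C_\gamma=\CT$, this means the first two summands must be lumped into $A$ (using a computable upper bound on $\CF$, e.g.\ $\CFup$ from~\ttg{eq:Friedtwosided}), while only the $L^2(\GammaN)$-term plays the role of $B$.
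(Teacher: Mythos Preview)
Your proposal is correct and follows essentially the same approach as the paper: test \ttg{eq:tracedefw} with $v=w$, insert the divergence-theorem identity for $\bq$, regroup into the three residuals, and then apply Cauchy--Schwarz together with $\norm{\nabla w}_\cA\leq\trinorm{w}$, Friedrichs' inequality \ttg{eq:Fried}, and the trace inequality \ttg{eq:trace}. Your additional remarks on grouping the boundary terms and on how to split the bound into $A$ and $B$ for Theorem~\ref{le:abscompl} match the paper's subsequent discussion in \ttg{eq:traceAB}.
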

\begin{proof}
Let us fix any $\bq \in \Hdiv$, test \ttg{eq:tracedefw} by $v=w$ and
use the divergence theorem to express
\begin{multline*}
  \trinorm{w}^2 = (\cA \nabla u_*,\nabla w) + (c u_*, w)
    + (\alpha u_*, w)_{L^2(\GammaN)} - \lambda_* (u_*,w)_{L^2(\GammaN)}
\\
    - (\bq,\nabla w) - (\ddiv\bq, w)
+ (\bq\cdot\bn,w)_{L^2(\GammaN)}
\\
  = \left(\cA (\nabla u_* - \cA^{-1} \bq),\nabla w\right)
  + (c u_* - \ddiv\bq,w)
  + (\alpha u_* - \lambda_* u_* + \bq\cdot\bn, w)_{L^2(\GammaN)}.
\end{multline*}
The Cauchy--Schwarz inequality, Friedrichs' inequality \ttg{eq:Fried},
and trace inequality \ttg{eq:trace}
yield
\begin{multline*}
\trinorm{w}^2 \leq \norm{\nabla u_* - \cA^{-1} \bq}_\cA \norm{\nabla w}_\cA
  + \CF \norm{c u_* - \ddiv\bq}_0 \trinorm{w}
\\
  + \CT \norm{\alpha u_* - \lambda_* u_* + \bq\cdot\bn}_{L^2(\GammaN)} \trinorm{w}.
\end{multline*}
The inequality $\norm{\nabla w}_\cA \leq \trinorm{w}$ finishes the proof.
\end{proof}

As in the case of Friedrichs' inequality, the bound \ttg{eq:tracewbound}
is of the type \ttg{eq:abscompl} with
\begin{equation}
\label{eq:traceAB}
 A = \norm{\nabla u_* - \cA^{-1} \bq}_{\cA}
      + \CF \norm{c u_* - \ddiv \bq}_{L^2(\Omega)},
\quad
  B = \norm{\alpha u_* - \lambda_* u_* + \bq \cdot \bn}_{L^2(\GammaN)}
\end{equation}
and $C_\gamma = \CT$.
Let us note that the complementarity estimate \ttg{eq:tracewbound}
is just one out of several possibilities. This bound comes
from \cite{Rep:2008} and contains Friedrichs' constant $\CF$.
Instead of its exact value an upper bound as computed in Section~\ref{se:Fried}
can be used here.
However, there exist other variants of
the complementarity technique that can be used to obtain a bound on
$\trinorm{w}$ avoiding the need of Friedrichs' constant $\CF$.
See for example \cite{robustaee:2010,BraSch:2008,LocalHypercycle:2006,Vohralik:2011}.

A suitable vector field $\bq \in \Hdiv$ is computed
by approximate minimization of the right-hand side of \ttg{eq:tracewbound}.
In a similar way as we obtained the functional \ttg{eq:Friedquadfunc},
we obtain the quadratic functional
\begin{multline}
  \label{eq:tracequadfunc}
  (1+\varrho^{-1})(1+\sigma^{-1}) \norm{\nabla u_1^h - \cA^{-1} \bq}_{\cA}^2
 +(1+\varrho)(1+\sigma^{-1})
     (\CFup)^2 \norm{c u_1^h - \ddiv \bq}_{L^2(\Omega)}^2
\\
 +(1+\sigma)
     (\lambda_1^h)^{-1} \norm{\alpha u_1^h - \lambda_1^h u_1^h + \bq \cdot \bn}_{L^2(\GammaN)}^2
\quad\forall \varrho > 0,\ \sigma > 0,
\end{multline}
where $\CFup$ is an upper bound of $\CF$ computed as described in
Sections~\ref{se:Friedbounds}--\ref{se:Friedflux},
$\lambda_1^h$ is the approximation of the smallest eigenvalue
of \ttg{eq:traceeigenp} obtained by the Galerkin method and $u_1^h$ is the
corresponding approximate eigenvector.
We look for the minimum of \ttg{eq:tracequadfunc} over a finite dimensional subspace
$W^h$ of $\Hdiv$. This minimization problem is equivalent to
seeking $\bq^h \in W^h$ such that
$$
  \cB(\bq^h,\bw) = \cF(\bw) \quad\forall \bw \in W^h,
$$
where
\begin{align*}
\cB(\bq,\bw) &=  \frac{1+\varrho}{\sigma} \lambda_1^h (\CFup)^2 (\ddiv\bq,\ddiv\bw)
  + \frac{1+\varrho}{\varrho\sigma} \lambda_1^h (\cA^{-1}\bq,\bw)
  + (\bq\cdot\bn, \bw \cdot \bn)_{L^2(\GammaN)},
\\
\cF(\bw) &= \frac{1+\varrho}{\varrho\sigma} \lambda_1^h (\nabla u_1^h, \bw)
  + \frac{1+\varrho}{\sigma} \lambda_1^h (\CFup)^2 (c u_1^h, \ddiv \bw)
  + (\lambda_1^h u_1^h - \alpha u_1^h, \bw \cdot \bn)_{L^2(\GammaN)}.
\end{align*}
Practically, the classical Raviart--Thomas--N\'{e}d\'{e}lec finite element method \cite{BreFor:1991}
can be used to solve this problem.
Note that the natural values for $\varrho$ and $\sigma$ are
$\varrho=1$ and $\sigma=2$, because then
$(1+\varrho^{-1})(1+\sigma^{-1}) = (1+\varrho)(1+\sigma^{-1}) = (1+\sigma) = 3$,
see \ttg{eq:tracequadfunc}.
Similarly to the case of Friedrichs' constant,
these natural values often yield accurate results.
If not, a simultaneous minimization of \ttg{eq:tracequadfunc}
with respect to $\varrho > 0$, $\sigma > 0$, and $\bq \in W^h$
can be performed.

The computed vector field $\bq^h \in W^h$ together with $\lambda_* = \lambda_1^h$ and $u_* = u_1^h$
are then substituted to \ttg{eq:traceAB}
in order to evaluate $A$ and $B$. Consequently, we obtain the
two-sided bounds $\CTlow \leq \CT \leq \CTup$, where
\begin{equation}
\label{eq:tracetwosided}
\CTlow = (\lambda_1^h)^{-1/2}
\quad\text{and}\quad
\CTup = 1/X_2,
\end{equation}
$X_2$ is given by \ttg{eq:X2} with $\gamma u_* = \gamma u_1^h$,
see \ttg{eq:abslowb} and \ttg{eq:absbound}.

\subsection{Numerical experiment}
\label{se:traceexper}

In order to illustrate the numerical performance of the above described method,
we consider the same example as in Section~\ref{se:Friedexper}.
We proceed in the same way as in Section~\ref{se:Friedexper}
with clear modifications in order to
compute two-sided bounds \ttg{eq:tracetwosided}
on the trace constant.
As before, the adaptive algorithm is steered by error indicators that
are in this case defined by a localized version of \ttg{eq:tracequadfunc}:
\begin{multline*}
  \eta_K^2 =
(1+\varrho^{-1})(1+\sigma^{-1}) \norm{\nabla u_1^h - \cA^{-1} \bq}_{\cA,K}^2
 +(1+\varrho)(1+\sigma^{-1})
     (\CFup)^2 \norm{c u_1^h - \ddiv \bq}_{L^2(K)}^2
\\
 +(1+\sigma)
     (\lambda_1^h)^{-1} \norm{\alpha u_1^h - \lambda_1^h u_1^h + \bq \cdot \bn}_{L^2(\partial K \cap \GammaN)}^2
\quad\forall K\in\cT_h.
\end{multline*}
The parameters $\varrho$ and $\sigma$ are
naturally chosen as $\varrho=1$ and $\sigma=2$.
The values of the upper bound $\CFup$ are taken from Table~\ref{ta:Friedres}.

\begin{table}
\center{
\begin{tabular}{r|cccr}
$\tilde a$ & $\CTlow$ & $\CTup$ & $\RelErr$ & $\NDOF$
\\ \hline
$0.001$ & 17.8110 & 17.9760 & 0.0092  & 5\,523 \\
$0.01$  &  5.6490 &  5.7047 & 0.0098  & 5\,418 \\
$0.1$   &  1.8433 &  1.8593 & 0.0086  & 7\,775 \\
$1$     &  0.7963 &  0.8033 & 0.0088  & 5\,499 \\
$10$    &  0.5829 &  0.5880 & 0.0086  & 7\,775 \\
$100$   &  0.5649 &  0.5705 & 0.0098  & 5\,421 \\
$1000$  &  0.5632 &  0.5685 & 0.0092  & 5\,523 \\
\end{tabular}
}
\caption{
Trace constant.
The lower bound $\CTlow$, upper bound $\CTup$, relative error $\RelErr$,
and the number of degrees of freedom $\NDOF=\operatorname{dim}V^h$ for
particular values of $\tilde a$.}
\label{ta:traceres}
\end{table}

\begin{figure}
\begin{center}
\includegraphics[height=50mm]{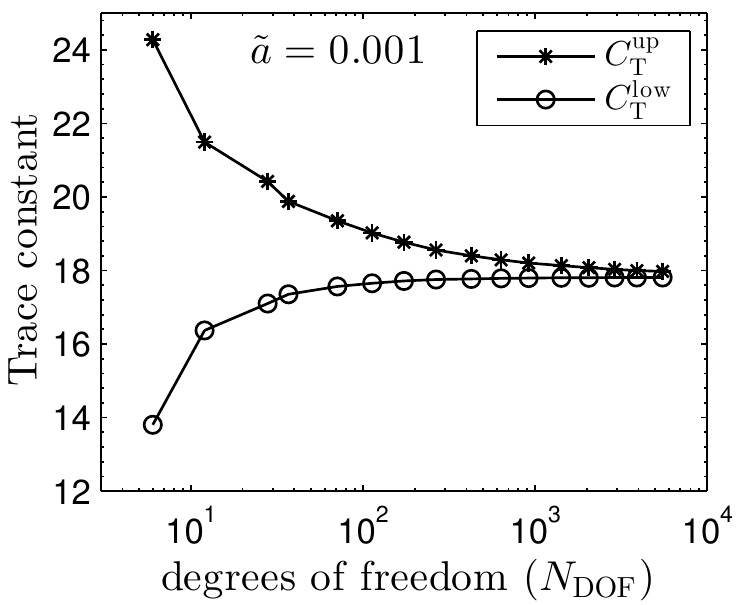}\quad
\includegraphics[height=50mm]{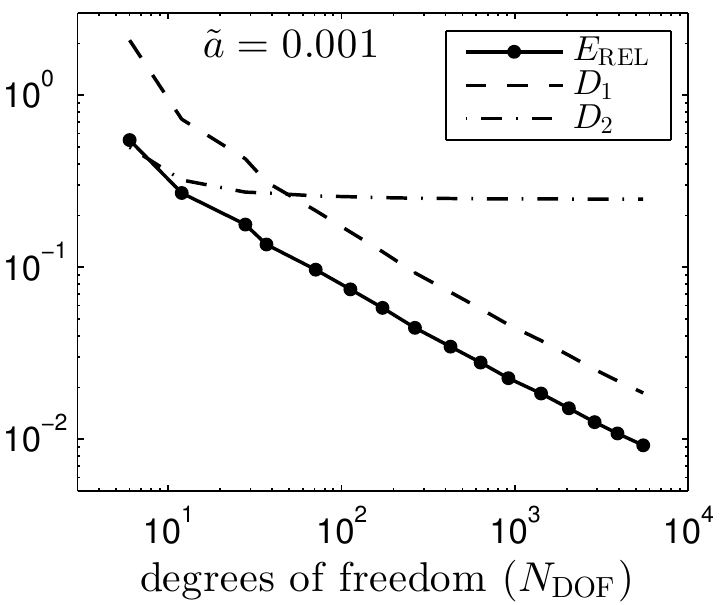}
\caption{
Trace constant for $\tilde a = 0.001$.
Convergence of bounds $\CTup$ and $\CTlow$ (left)
and of the relative error $\RelErr$ and heuristic indicators $D_1$ and $D_2$ (right).
}
\label{fi:resCT}
\end{center}
\end{figure}

The obtained results are presented in Table~\ref{ta:traceres}.
The method succeeded in obtaining guaranteed two-sided bounds
on the trace constant with a relative error at most 0.01
in all cases using several thousands of degrees of freedom.
The particular value of the trace constant $\CT$ depends considerably
on $\tilde a$. The values for $\tilde a=0.001$ and $\tilde a=1000$ differ more than
thirty times. Further, notice that the exact value of the trace constant
for $\tilde a=1$ is $\CT= (2/(\pi\coth \pi))^{1/2} \approx 0.7964$. 

As before, we illustrate the adaptive process for the case
$\tilde a = 0.001$. Figure~\ref{fi:resCT} presents
the convergence of bounds $\CTup$ and $\CTlow$ (left panel)
and the relative error $\RelErr$ with heuristic indicators
$D_1$, $D_2$ (right panel).
Good confidence in the validity of assumption \ttg{eq:closest}
stems from the fact that $D_1$ is several times smaller than $D_2$
in later stages of the adaptive process.
Figures~\ref{fi:meshesA}--\ref{fi:meshesB} (right) show two of the adapted meshes.

Let us note that Tables~\ref{ta:Friedres}, \ref{ta:Poinres}, and \ref{ta:traceres}
show highest numbers of degrees of freedom for $\tilde a = 0.1$
and $10$. This is probably a coincidence caused by the fact that the error
decreases in jumps after each mesh refinement.
For example, if $\NDOF$ is about $5\,000$ then the relative error is already
close to the threshold $\RelErr=0.01$. If it is slightly below the threshold,
we stop the algorithm, but if it is slightly above, we have to refine
the mesh one more time, which results in higher $\NDOF$
and also a smaller $\RelErr$.

\begin{figure}
\begin{center}
\includegraphics[width=0.3\textwidth]{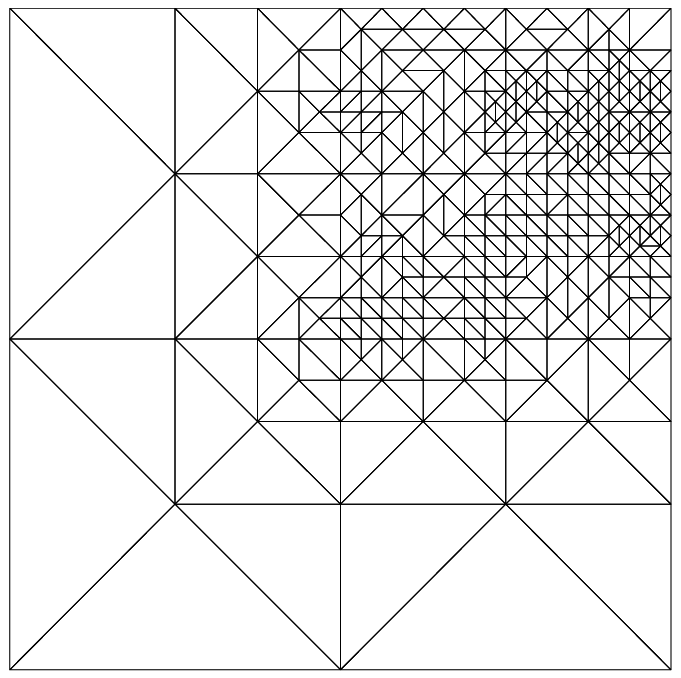}\quad
\includegraphics[width=0.3\textwidth]{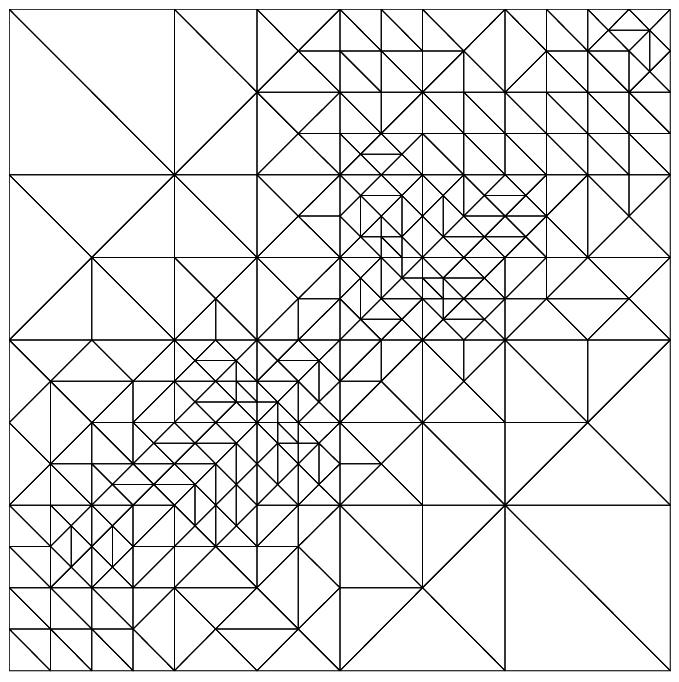}\quad
\includegraphics[width=0.3\textwidth]{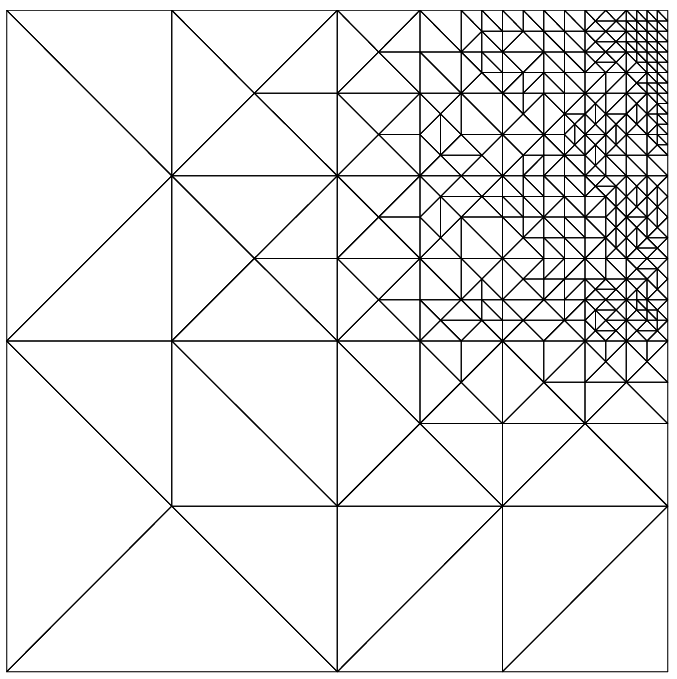}
\caption{Meshes in the middle of the adaptive process for $\tilde a = 0.001$. From left to right: Friedrichs' (adaptive step 8), Poincar\'e (adaptive step 7), trace constant (adaptive step 8).}
\label{fi:meshesA}
\end{center}
\end{figure}

\begin{figure}
\begin{center}
\includegraphics[width=0.3\textwidth]{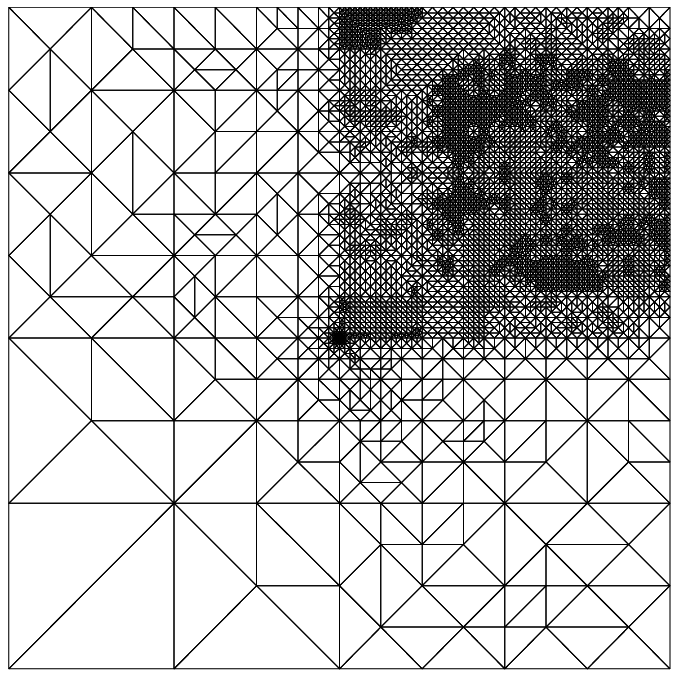}\quad
\includegraphics[width=0.3\textwidth]{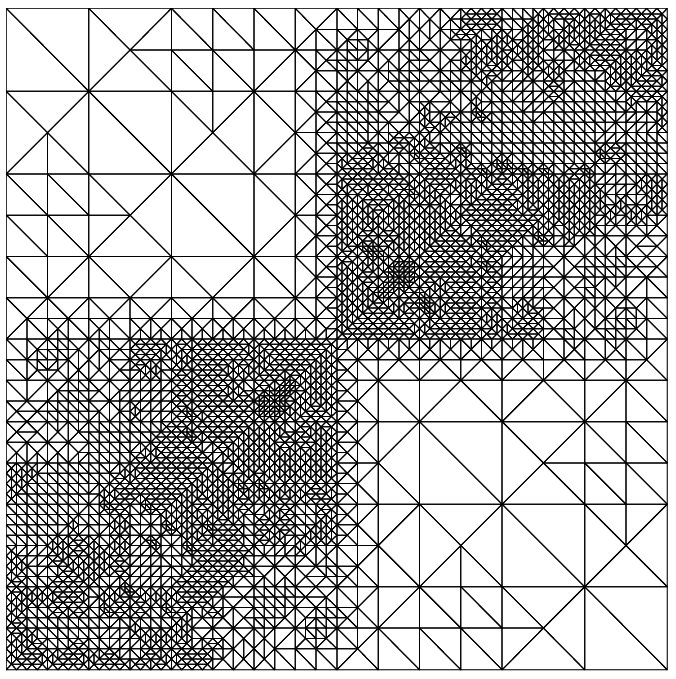}\quad
\includegraphics[width=0.3\textwidth]{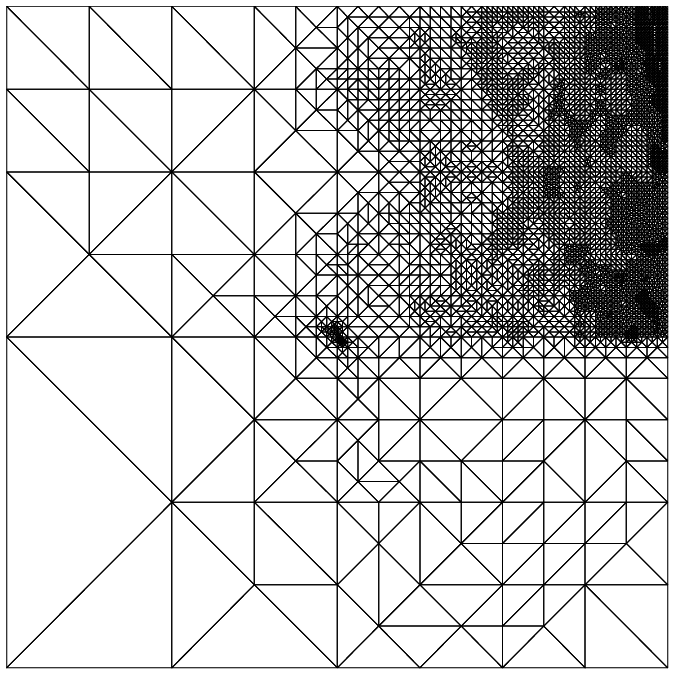}
\caption{Meshes in the final adaptive step for $\tilde a = 0.001$. From left to right: Friedrichs' (adaptive step 16), Poincar\'e (adaptive step 14), trace constant (adaptive step 15).}
\label{fi:meshesB}
\end{center}
\end{figure}

\section{Conclusions}
\label{se:concl}

We present a method for computing guaranteed lower and upper bounds
of principal eigenvalues of elliptic operators and consequently for computing guaranteed two-sided bounds of the optimal constants in Friedrichs',
Poincar\'e, trace, and similar inequalities.
The bounds are guaranteed provided there are no round-off errors and
all integrals are evaluated exactly.
Further, the bounds are guaranteed only if the domain $\Omega$ is represented exactly by used finite elements.
Furthermore, the upper bounds on eigenvalues
computed by the Galerkin method are guaranteed only if the corresponding
matrix-eigenvalue problems are solved exactly. On the other hand,
the lower bounds on eigenvalues obtained by the complementarity technique
are guaranteed even if the matrix-eigenvalue problems and
linear algebraic systems are solved approximately only.
In any case, the crucial assumption for having guaranteed lower bounds
on eigenvalues is \ttg{eq:closest}.

These two-sided bounds can be of interest if the corresponding eigenvalue
problem cannot be solved analytically and if analytical estimates
are not available or they are too inaccurate.
In particular, this is the case of
complicated geometry of the domain $\Omega$,
mixed boundary conditions,
presence of non-constant and/or anisotropic diffusion coefficient $\cA$,
presence of reaction coefficient $c$,
and presence of coefficient $\alpha$.

The method is quite general and it can be used for a wide variety of problems.
The general Hilbert space setting presented in Section~\ref{se:abseigenprob}
enables a variety of applications including linear elasticity.
We believe that this approach can be further generalized.
Nonlinear eigenvalue problems and nonsymmetric operators can be
of particular interest and generalizations in these directions
can be subject for further research.




\bibliographystyle{siam}
\bibliography{bibl}

\end{document}